\documentclass[11pt]{article}
\usepackage[margin=1in]{geometry}
\usepackage{amsmath,amsthm,amssymb,mathtools,bm}
\usepackage[T1]{fontenc}
\usepackage{lmodern}
\usepackage{microtype}
\usepackage[round,authoryear]{natbib}
\usepackage{hyperref}
\usepackage{enumitem}
\usepackage{mathrsfs}

\hypersetup{colorlinks=true,linkcolor=blue,citecolor=blue,urlcolor=blue}

\newtheorem{theorem}{Theorem}[section]
\newtheorem{proposition}[theorem]{Proposition}
\newtheorem{lemma}[theorem]{Lemma}
\newtheorem{corollary}[theorem]{Corollary}
\theoremstyle{definition}
\newtheorem{definition}[theorem]{Definition}
\newtheorem{remark}[theorem]{Remark}

\title{Characterisations of Kullback--Leibler approximation by finite Gaussian mixtures}
\author{Hien Duy Nguyen\\
\small Department of Mathematics and Physical Sciences, La Trobe University, Australia\\
\small Institute of Mathematics for Industry, Kyushu University, Japan\\
\small Email: \texttt{h.nguyen5@latrobe.edu.au}}
\date{}

\begin{document}
\maketitle

\begin{abstract}
We study the Kullback--Leibler (KL) divergence approximation theory of Gaussian mixture models (GMMs) by isolating an abstract mechanism behind several necessary-and-sufficient statements. The necessity direction is universal: if a density is approximable in KL divergence by finite GMMs, then it must have finite second moment. The sufficient direction is reduced to the construction of approximating GMMs whose likelihood ratios converge pointwise and whose finite log-ratios form a uniformly integrable family. We verify this mechanism on a finite log-moment class of continuous strictly positive target densities, from which bounded, $\mathcal L^p$ $(p>1)$, and Orlicz-dominated subfamilies follow immediately. We also show that a countable-scale support-aware target density class, which allows zero density regions, satisfies the same equivalence. Finally, we give counterexamples showing that the countable-scale class strictly extends the fixed-scale class, that the finite log-moment and countable-scale support-aware classes do not contain one another, and that their union is not exhaustive.
\end{abstract}

\medskip
\noindent\textbf{Keywords.} Gaussian kernel; finite mixture models; Kullback--Leibler divergence; approximation theory; equivalence characterizations 

\section{Introduction}\label{sec:intro}

Gaussian mixture models (GMMs) occupy a central position in probability, statistics, and allied areas of applied mathematics. A Gaussian mixture density on $\mathbb{R}^d$ is a probability density of the form
\[
 g(x)=\sum_{j=1}^m \pi_j\,\varphi(x;\mu_j,\Sigma_j),
\qquad m\in\mathbb N,
\quad \pi_j\ge 0,
\quad \sum_{j=1}^m \pi_j=1,
\]
where $\mu_j\in\mathbb{R}^d$, $\Sigma_j\succ0$, and $\varphi(\cdot;\mu_j,\Sigma_j)$ denotes the Gaussian density with mean $\mu_j$ and covariance matrix $\Sigma_j$. Gaussian mixing distributions and finite GMMs are widely used because they simultaneously provide interpretable statistical models for heterogeneous populations and flexible approximation devices for unknown probability densities; standard references include \citet{McLachlanPeel2000} and \citet{Chen2023}. The flexibility of mixture models, and in particular the frequently invoked claim that Gaussian mixtures can approximate arbitrary densities, is reviewed in \citet{NguyenMcLachlan2019}, who collect a number of statements of this folk theorem.

For the remainder of the manuscript, $\lambda$ denotes Lebesgue measure on $\mathbb{R}^d$. If $f$ is the target density, we write
\[
F(A)=\int_A f(x)\,\mathrm{d}x,
\qquad A\in\mathcal B(\mathbb{R}^d),
\]
for the corresponding probability measure. For densities $f$ and $g$, we write
\[
\mathrm{KL}(f\|g)=\int_{\mathbb{R}^d} f(x)\log\!\left(\frac{f(x)}{g(x)}\right)\,\mathrm{d}x.
\]
For $p\in[1,\infty]$, $\mathcal L^p$ denotes the usual Lebesgue space with respect to $\lambda$.

Approximation results for Gaussian kernels have a long history. At a classical level, the Tauberian theory of \citet{Wiener1932} already identifies density properties of translates and convolutions of Gaussian kernels. In the approximation-theory and neural-network literatures, the universal approximation properties of radial basis function networks, including Gaussian kernels, are proved, for example, in \citet{ParkSandberg1991} and \citet{ParkSandberg1993}; see also \citet[Chapter~20]{CheneyLight2009}. When one additionally imposes positivity and unit-mass constraints so that the approximants themselves remain densities, the problem becomes more subtle. For example, \citet{Bacharoglou2010}, \citet{NguyenEtAl2020}, and \citet{NguyenTTEtAl2022} establish qualitative or $\mathcal L^p$-type approximation results for finite location-scale mixtures and related convex mixture classes. 

From the statistical point of view, the Kullback--Leibler (KL) divergence is the natural approximation criterion because it is the population target of maximum likelihood under misspecification; see \citet{White1982}. This is particularly relevant for mixtures, since finite mixture models are typically fitted by maximum likelihood or penalized likelihood procedures in practice; see, for example, \citet{McLachlanPeel2000} and \citet{Chen2023}. In particular many theoretical frameworks for finite-mixture approximation and estimation are formulated around the minimum KL divergence, or around criteria that are directly implied by it. Examples include the approximation and estimation bounds of \citet{ZeeviMeir1997}, \citet{LiBarron2000}, and \citet{RakhlinPanchenkoMukherjee2005}; the Bayesian and penalized-mixture approximation programs of \citet{KruijerRousseauvdV2010}, \citet{MaugisMichel2011}, and \citet{MaugisRabusseauMichel2013}; the minimax theory for Gaussian location mixtures of \citet{KimGuntuboyina2022}; and the recent KL-stability analysis for Gaussian non-parametric maximum likelihood estimators by \citet{GhoshGuntuboyinaMukherjeeTran2026}.

Unfortunately, the aforementioned $\mathcal L^p$ approximation results do not apply to KL approximation. Indeed, KL controls $\mathcal L^1$ through Pinsker's inequality, but it is only bounded above by a $\mathcal L^2$-distance under strong positivity assumptions on both the target and the approximant; see Lemma~3 of \citet{ZeeviMeir1997} and the discussion in \citet{NguyenMcLachlan2019}. An important KL approximation theorem was obtained by \citet[Theorem~3]{NoretsPelenis2012}. Roughly speaking, \cite{NoretsPelenis2012} identify a fixed-scale support-regularity condition, formulated in terms of local log-oscillation control and a cube-thickness requirement near the support boundary, under which finite second moment implies KL approximation by finite Gaussian mixtures. We record the precise fixed-scale support condition later in Definition~\ref{def:Sfix}. Immediately after their theorem, \cite{NoretsPelenis2012} note that the finite second-moment assumption is the strongest one and that the proof suggests weakening it only by enlarging the component family beyond Gaussian densities. The starting point of the present manuscript is to show that this obstruction is genuine for finite GMMs. That is, finite second-moment existence is not only sufficient on suitable classes, but universally necessary for KL approximability by finite GMMs.

The contribution of this manuscript is fourfold. First, we prove a universal necessity theorem: if a density admits finite KL divergence to some finite GMM, and hence a fortiori if it is approximable in KL by a sequence $(g_m)_{m\ge1}$ of such mixtures, then its second moment must be finite. Second, we isolate an abstract sufficiency argument. The key observation is that if one can construct finite GMMs $(g_m)_{m\ge1}$ such that $f/g_m\to1$ almost surely under the target law and the corresponding finite log-ratios $\left(\log_+(f/g_m)\right)_{m\ge1}$ are uniformly integrable, then $\mathrm{KL}(f\|g_m)\to0$. Here and throughout, $\log_+(t)=\max\{\log t,0\}$ for $t>0$, with the convention $\log_+0=0$. Combined with the universal necessity theorem, this yields a meta necessary and sufficient condition on any class for which those two properties can be verified.

Third, we verify this abstract mechanism by two distinct approaches. The first is a finite log-moment approach: continuity, strict positivity, and finiteness of $\int f\log_+ f\,\mathrm{d}x$ are enough to verify the uniform-integrability step, and this approach immediately covers bounded, $\mathcal L^p$ $(p>1)$, and more general Orlicz-dominated subfamilies. It also contains the target-density classes used in the logarithmic H\"older mixture literature of \citet{KruijerRousseauvdV2010} and \citet{MaugisRabusseauMichel2013}; see Remark~\ref{rem:rateclasses}. The second approach is support-aware and follows the local lower-envelope structure used by \citet{NoretsPelenis2012}. We first restate their fixed-scale target class and then introduce a countable-scale extension (cf. Definition~\ref{def:Scnt}) obtained by partitioning the support into pieces whose normalized restrictions each satisfy that fixed-scale condition, with summability strong enough to recover the same KL equivalence with the second moment.

Fourth, we give counterexamples clarifying the relation between the two approaches. The countable-scale support-aware class is a genuine extension of the fixed-scale class, and it contains densities with support gaps and zeros that lie outside the present finite log-moment class of continuous strictly positive densities. We also exhibit a finite log-moment density that fails the countable-scale support-aware condition. Together with the strict-extension example, this shows that the finite log-moment and countable-scale support-aware classes do not contain one another. Finally, we exhibit a KL-approximable density outside both sufficient classes, showing that the overall sufficient theory is not exhaustive.

The remainder of the manuscript is organised as follows. Section~\ref{sec:notation} collects notation and preliminary definitions. Section~\ref{sec:meta} proves the necessity of the second moment and states the abstract meta theorem. Section~\ref{sec:classes} verifies the abstract hypothesis on several concrete classes, including the finite log-moment class and the fixed-scale and countable-scale support-aware classes. Section~\ref{sec:counterexamples} presents the strict-extension and non-exhaustiveness counterexamples. Section~\ref{sec:conclusion} closes with a brief conclusion. Useful auxiliary results are recorded in Appendix~\ref{app:external}.

\section{Notation and technical preliminaries}\label{sec:notation}

We retain the notation introduced in Section~\ref{sec:intro} throughout. All densities are understood with respect to Lebesgue measure $\lambda$ on $\mathbb{R}^d$; when the integration variable is explicit, we write this measure simply as $\mathrm{d}x$, $\mathrm{d}y$, and so on. If $f$ is a density, then $F$ denotes the probability measure with density $f$, so that
\[
F(A)=\int_A f(x)\,\mathrm{d}x
\]
for measurable $A\subseteq\mathbb{R}^d$. Throughout the manuscript, we write
\[
\mathrm{supp}(f)=\{x\in\mathbb{R}^d:f(x)>0\}
\]
for the support convention used in the support-aware arguments below. For $R>0$,
\[
B_R=\{x\in\mathbb{R}^d:\left\lVert x\right\rVert\le R\},
\qquad
B_R^c=\mathbb{R}^d\setminus B_R,
\]
where $\left\lVert\cdot\right\rVert$ is the Euclidean norm, and $I_d$ denotes the $d\times d$ identity matrix. We write $\sqcup$ for disjoint union, $\mathbf{1}_A$ for the indicator of a set $A$, and
\[
\operatorname{dist}(x,A)=\inf_{a\in A}\left\lVert x-a\right\rVert
\]
for the Euclidean distance from a point $x$ to a set $A$. Unless stated otherwise, comparison cubes such as $C(r,y)$ and $C_\ell(y)$ are closed and axis-aligned. Partition cubes may use the usual half-open endpoint convention needed to obtain disjoint unions. A cube of side length $r$ with a vertex at $y$ means an axis-aligned closed cube of side length $r$ having $y$ as one of its vertices. A family of cubes of common side length is called adjacent if the cubes have pairwise disjoint interiors and the union of their closures is connected. We retain the notation $\log_+$ from Section~\ref{sec:intro}, and we use the continuous extension $u\log\!\left(1/u\right)=0$ at $u=0$. Finally,
\[
\mathcal M=
\left\{
x\mapsto \sum_{j=1}^m \pi_j\,\varphi(x;\mu_j,\Sigma_j):
m\in\mathbb N,\ 
\pi_j\ge0,\ 
\sum_{j=1}^m \pi_j=1,\ 
\mu_j\in\mathbb{R}^d,\ 
\Sigma_j\succ0
\right\}
\]
where
\[
\varphi(x;\mu,\Sigma)
=
(2\pi)^{-d/2}|\Sigma|^{-1/2}
\exp\!\left(-\frac12 (x-\mu)^\top \Sigma^{-1}(x-\mu)\right),
\qquad x\in\mathbb{R}^d,
\]
for $\mu\in\mathbb{R}^d$ and $\Sigma\succ0$. Thus $\mathcal M$ denotes the class of all finite GMM densities on $\mathbb{R}^d$.

\section{Necessity of the second moment and a meta theorem}\label{sec:meta}

This section isolates the universal part of the argument. The key point is that KL convergence follows once the finite log-ratios are uniformly integrable and the likelihood ratios converge pointwise, whereas finite second moment is necessary independently of any class-specific approximation scheme.

\subsection{Uniform integrability of the finite log-ratios}

We begin by fixing the uniform-integrability notion used throughout the manuscript.

\begin{definition}[Uniform integrability]
Let $(X_m)_{m\ge1}$ be a family of nonnegative random variables on $(\mathbb{R}^d,\mathcal B(\mathbb{R}^d),F)$. We say that $(X_m)_{m\ge1}$ is uniformly integrable under $F$ if
\[
\lim_{M\to\infty}\sup_{m\ge1}\int_{\{X_m>M\}} X_m\,\mathrm{d}F =0.
\]
\end{definition}

The next proposition is the basic mechanism behind the sufficient direction.

\begin{proposition}
\label{prop:UIKL}
Let $(g_m)_{m\ge1}$ be densities on $\mathbb{R}^d$ such that:
\begin{enumerate}[label=\textnormal{(\roman*)}]
    \item $g_m(x)>0$ for $F$-almost every $x$;
    \item $f(x)/g_m(x)\to1$ for $F$-almost every $x$;
    \item the family $\{L_m\}_{m\ge1}$, where
    \[
    L_m=\log_+\!\left(\frac{f}{g_m}\right),
    \]
    is uniformly integrable under $F$.
\end{enumerate}
Then
\[
\mathrm{KL}(f\|g_m)\to0.
\]
\end{proposition}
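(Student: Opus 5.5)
The plan is to write the KL divergence as an $F$-expectation of the log-ratio and split it into positive and negative parts. Set $R_m=f/g_m$, which by (i) is well defined and finite $F$-a.s. Then
\[
\mathrm{KL}(f\|g_m)=\int \log R_m\,\mathrm{d}F=\int \log_+ R_m\,\mathrm{d}F-\int \log_- R_m\,\mathrm{d}F,
\]
with $\log_-t=\max\{-\log t,0\}$. This decomposition is legitimate once the positive part is integrable, which follows from (iii). Indeed, uniform integrability gives $\sup_m\int L_m\,\mathrm{d}F\le M+1<\infty$ for suitable $M$. The negative part is always finite, since $\int\log_- R_m\,\mathrm{d}F\le \int f\,(g_m/f-1)_+\,\mathrm{d}x\le 1$, using $\log_-t\le 1/t-1$ for $t\le1$. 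So each $\mathrm{KL}(f\|g_m)$ is a well-defined finite number, and it is nonnegative by Gibbs' inequality.

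The first step is to show that $\int L_m\,\mathrm{d}F\to0$. By (ii) and continuity of $\log_+$ at $1$, we have $L_m\to0$ $F$-a.s. Combined with (iii), Vitali's convergence theorem on the probability space $(\mathbb{R}^d,\mathcal B(\mathbb{R}^d),F)$ gives $L_m\to0$ in $L^1(F)$. To be self-contained, I would prove this directly. Write $\int L_m\,\mathrm{d}F\le \int \min\{L_m,M\}\,\mathrm{d}F+\sup_k\int_{\{L_k>M\}}L_k\,\mathrm{d}F$. The first term tends to $0$ by bounded convergence, and the second is small for large $M$.

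The second step handles the negative part: we need $\int\log_- R_m\,\mathrm{d}F\to0$. This is the only place where the lack of a uniform integrability hypothesis on $\log_-$ might look like an obstacle. It does not bite, because of nonnegativity of KL: $0\le \mathrm{KL}(f\|g_m)\le \int \log_+R_m\,\mathrm{d}F=\int L_m\,\mathrm{d}F\to0$, since the subtracted negative part is nonnegative. So the sandwich finishes the proof without any separate control of $\log_-R_m$.

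The remaining care point is justifying $\mathrm{KL}\ge0$ in this generality, with $g_m$ a density positive $F$-a.s. I would do this via Jensen applied to the concave $\log$ on the integrable quantity $g_m/f$ under $F$: $-\mathrm{KL}=\int\log(g_m/f)\,\mathrm{d}F\le\log\int_{\{f>0\}}g_m\,\mathrm{d}x\le\log1=0$. Here the integrability established above ensures the integral of $\log(g_m/f)$ is well defined in $[-\infty,\infty)$, so Jensen applies.
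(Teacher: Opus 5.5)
Your proposal is correct and follows the paper's argument: you use the sandwich $0\le \mathrm{KL}(f\|g_m)\le\int L_m\,\mathrm{d}F$, then a truncation (Vitali) argument to get $\int L_m\,\mathrm{d}F\to0$ from almost-sure convergence plus uniform integrability. Your extra checks, that the negative part of the log-ratio has integral at most $1$ and that Gibbs' inequality follows from Jensen, spell out the well-definedness of $\mathrm{KL}(f\|g_m)$ that the paper takes for granted.
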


\begin{proof}
By Gibbs' inequality, $\mathrm{KL}(f\|g_m)\ge0$ for every $m$. Since the family $\{L_m\}_{m\ge1}$ is uniformly integrable under $F$, each $L_m$ is integrable. Moreover,
\[
\log\!\left(\frac{f}{g_m}\right)\le \log_+\!\left(\frac{f}{g_m}\right)=L_m
\]
pointwise, and therefore
\begin{equation}
\label{eq:KLleL}
0\le \mathrm{KL}(f\|g_m)
=\int \log\!\left(\frac{f}{g_m}\right)\,\mathrm{d}F
\le \int L_m\,\mathrm{d}F.
\end{equation}
By assumption, $L_m(x)\to0$ for $F$-almost every $x$.

It therefore remains to show that $\int L_m\,\mathrm{d}F\to0$. Let $\varepsilon>0$. Uniform integrability yields an $M<\infty$ such that
\[
\sup_{m\ge1}\int_{\{L_m>M\}}L_m\,\mathrm{d}F<\varepsilon.
\]
Then
\[
\int L_m\,\mathrm{d}F
=\int_{\{L_m\le M\}}L_m\,\mathrm{d}F+\int_{\{L_m>M\}}L_m\,\mathrm{d}F.
\]
The second term is bounded by $\varepsilon$ uniformly in $m$. On $\{L_m\le M\}$ we have $0\le L_m\le M$, and $L_m\to0$ $F$-almost everywhere, so dominated convergence gives
\[
\int_{\{L_m\le M\}}L_m\,\mathrm{d}F\to0.
\]
Hence $\limsup_m\int L_m\,\mathrm{d}F\le\varepsilon$. Since $\varepsilon>0$ was arbitrary, $\int L_m\,\mathrm{d}F\to0$. Returning to \eqref{eq:KLleL}, we conclude that $\mathrm{KL}(f\|g_m)\to0$.
\end{proof}

\subsection{Finite second moment is always necessary}

The necessity statement rests on the observation that every finite GMM has a nontrivial quadratic exponential moment.

\begin{lemma}
\label{lem:gauss-exp}
Let $g\in\mathcal M$. Then there exists $t_0>0$ such that
\[
\int_{\mathbb{R}^d} e^{t_0\left\lVert x \right\rVert^2} g(x)\,\mathrm{d}x<\infty.
\]
\end{lemma}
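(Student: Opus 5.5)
By linearity of the integral it suffices to treat a single Gaussian component. Write $g=\sum_{j=1}^m\pi_j\varphi(\cdot;\mu_j,\Sigma_j)$. Then
\[
\int e^{t\lVert x\rVert^2}g(x)\,\mathrm{d}x=\sum_{j=1}^m\pi_j\int e^{t\lVert x\rVert^2}\varphi(x;\mu_j,\Sigma_j)\,\mathrm{d}x,
\]
and the finite sum is finite as soon as each summand is. So the plan is to find, for each $j$, a threshold $t_j>0$ such that the $j$th integral is finite for all $t\in(0,t_j)$. We then take $t_0=\tfrac12\min_j t_j$, which is positive because $m<\infty$. Finiteness of the number of components is exactly what is used here.

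For a single component with mean $\mu$ and covariance $\Sigma\succ0$, let $\lambda_{\max}>0$ be the largest eigenvalue of $\Sigma$. Then $\Sigma^{-1}\succeq\lambda_{\max}^{-1}I_d$, so
\[
\varphi(x;\mu,\Sigma)\le(2\pi)^{-d/2}|\Sigma|^{-1/2}\exp\!\left(-\frac{\lVert x-\mu\rVert^2}{2\lambda_{\max}}\right).
\]
Next we compare $\lVert x\rVert^2$ with $\lVert x-\mu\rVert^2$ through the elementary bound $\lVert x\rVert^2\le 2\lVert x-\mu\rVert^2+2\lVert\mu\rVert^2$. This gives
\[
e^{t\lVert x\rVert^2}\varphi(x;\mu,\Sigma)\le C\,e^{2t\lVert\mu\rVert^2}\exp\!\left(-\Bigl(\frac{1}{2\lambda_{\max}}-2t\Bigr)\lVert x-\mu\rVert^2\right).
\]
For $t<1/(4\lambda_{\max})$ the exponent coefficient is strictly negative. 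The right-hand side is then an integrable Gaussian function, with integral $(\pi/a)^{d/2}$ where $a=\frac{1}{2\lambda_{\max}}-2t$. So $t_j=1/(4\lambda_{\max}(\Sigma_j))$ works.

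There is no real obstacle here. The only points needing care are the uniform eigenvalue bound on $\Sigma^{-1}$, which handles general covariances, and the shift from $x$ to $x-\mu$. One could instead compute the integral exactly: it is finite if and only if $\Sigma^{-1}-2tI_d\succ0$. The crude bound above suffices and avoids computing determinants.
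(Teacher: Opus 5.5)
Your proof is correct and follows essentially the same route as the paper: reduce to a single Gaussian component, separate the mean via $\lVert a+b\rVert^2\le 2\lVert a\rVert^2+2\lVert b\rVert^2$, and get the same threshold $t<1/(4\lambda_{\max}(\Sigma_j))$. The only difference is in the last step: the paper diagonalizes $\Sigma_j$ and uses Fubini to evaluate the integral exactly as $\prod_{\ell}(1-4t_0\lambda_{j,\ell})^{-1/2}$, whereas you bound the density pointwise by an isotropic Gaussian using $\Sigma^{-1}\succeq\lambda_{\max}^{-1}I_d$, which avoids the diagonalization.
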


\begin{proof}
Write
\[
g(x)=\sum_{j=1}^m \pi_j\varphi(x;\mu_j,\Sigma_j).
\]
For each $j$, let $Z_j\sim \mathrm{N}(\mu_j,\Sigma_j)$, and let $\lambda_{j,1},\dots,\lambda_{j,d}>0$ be the eigenvalues of $\Sigma_j$. Choose
\[
0<t_0<\min_{1\le j\le m}\frac{1}{4\lambda_{\max}(\Sigma_j)}.
\]
Writing $Z_j=\mu_j+\Sigma_j^{1/2}W$ with $W\sim \mathrm{N}(0,I_d)$ and using $\left\lVert a+b \right\rVert^2\le 2\left\lVert a \right\rVert^2+2\left\lVert b \right\rVert^2$, we obtain
\[
\left\lVert Z_j \right\rVert^2\le 2\left\lVert \mu_j \right\rVert^2+2\left\lVert \Sigma_j^{1/2}W \right\rVert^2.
\]
Hence
\[
\int_{\mathbb{R}^d} e^{t_0\left\lVert x \right\rVert^2}\varphi(x;\mu_j,\Sigma_j)\,\mathrm{d}x
\le e^{2t_0\left\lVert \mu_j \right\rVert^2}
\int_{\mathbb{R}^d} e^{2t_0\left\lVert \Sigma_j^{1/2}w \right\rVert^2}\varphi(w;0,I_d)\,\mathrm{d}w.
\]
Let $U_j$ be an orthogonal matrix such that
\[
\Sigma_j = U_j \operatorname{diag}(\lambda_{j,1},\dots,\lambda_{j,d})U_j^\top.
\]
Since $\Sigma_j^{1/2}=U_j\operatorname{diag}(\lambda_{j,1}^{1/2},\dots,\lambda_{j,d}^{1/2})U_j^\top$ and $\varphi(w;0,I_d)$ is orthogonally invariant, the change of variables $u=U_j^\top w$ gives
\[
\int_{\mathbb{R}^d} e^{2t_0\left\lVert \Sigma_j^{1/2}w \right\rVert^2}\varphi(w;0,I_d)\,\mathrm{d}w
=
\int_{\mathbb{R}^d}
e^{2t_0\sum_{\ell=1}^d \lambda_{j,\ell}u_\ell^2}
\prod_{\ell=1}^d (2\pi)^{-1/2}e^{-u_\ell^2/2}\,\mathrm{d}u.
\]
Now the integrand factorizes into a product of one-dimensional functions, so Fubini's theorem yields
\[
\int_{\mathbb{R}^d} e^{2t_0\left\lVert \Sigma_j^{1/2}w \right\rVert^2}\varphi(w;0,I_d)\,\mathrm{d}w
=
\prod_{\ell=1}^d \int_{\mathbb{R}} (2\pi)^{-1/2} e^{-(1-4t_0\lambda_{j,\ell})u^2/2}\,\mathrm{d}u
=
\prod_{\ell=1}^d (1-4t_0\lambda_{j,\ell})^{-1/2}<\infty,
\]
because $4t_0\lambda_{j,\ell}<1$ for every $j$ and $\ell$. Therefore each Gaussian component has a finite quadratic exponential moment at $t_0$, and so does the finite GMM $g$.
\end{proof}

\begin{theorem}[Universal necessity of the second moment]
\label{thm:necessity}
Let $f$ be any density on $\mathbb{R}^d$. If there exists $g\in\mathcal M$ such that $\mathrm{KL}(f\|g)<\infty$, then
\[
\int_{\mathbb{R}^d}\left\lVert x \right\rVert^2 f(x)\,\mathrm{d}x<\infty.
\]
In particular, if
\[
\inf_{g\in\mathcal M}\mathrm{KL}(f\|g)=0,
\]
then the same second-moment conclusion holds.
\end{theorem}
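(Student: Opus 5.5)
The plan is to use the Donsker--Varadhan (Gibbs) variational inequality for KL divergence, combined with the quadratic exponential moment from Lemma~\ref{lem:gauss-exp}. Fix $g\in\mathcal M$ with $\mathrm{KL}(f\|g)<\infty$, and let $t_0>0$ be as in Lemma~\ref{lem:gauss-exp}. Since KL is finite, $F\ll G$, where $G$ is the law of $g$, so $g>0$ $F$-a.e. For a nonnegative measurable $h$, the inequality to establish is
\[
\int h\,\mathrm{d}F\le \mathrm{KL}(f\|g)+\log\int e^{h}g\,\mathrm{d}x .
\]
Applied with $h(x)=t_0\lVert x\rVert^2$, it gives
\[
t_0\int\lVert x\rVert^2 f(x)\,\mathrm{d}x\le \mathrm{KL}(f\|g)+\log\int e^{t_0\lVert x\rVert^2}g(x)\,\mathrm{d}x<\infty,
\]
which is the claim.

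To justify the inequality without assuming a priori that $\int h\,\mathrm{d}F<\infty$, I will truncate. Set $h_M=\min\{h,M\}$ and $Z_M=\int e^{h_M}g\,\mathrm{d}x\in[1,\infty)$, and define the tilted density $q_M=e^{h_M}g/Z_M$. Then
\[
0\le \mathrm{KL}(f\|q_M)=\mathrm{KL}(f\|g)-\int h_M\,\mathrm{d}F+\log Z_M .
\]
Every term here is finite because $h_M$ is bounded, and the split of the logarithm is legitimate since $\log(f/g)$ is quasi-integrable with finite integral. Rearranging gives $\int h_M\,\mathrm{d}F\le \mathrm{KL}(f\|g)+\log Z_M\le \mathrm{KL}(f\|g)+\log\int e^{h}g\,\mathrm{d}x$. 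Letting $M\to\infty$ and applying monotone convergence on the left yields the bound.

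The only delicate point is measure-theoretic: making sense of $\mathrm{KL}(f\|g)=\int\log(f/g)\,\mathrm{d}F$ when $\log(f/g)$ is not absolutely integrable. One has to check that the negative part is always integrable, which follows from $\int f\log_-(f/g)\le\int_{\{g>f\}} (g-f)\le 1$. Once that is settled, the decomposition of $\mathrm{KL}(f\|q_M)$ is valid. The final statement about $\inf_{g\in\mathcal M}\mathrm{KL}(f\|g)=0$ is immediate, since that infimum being zero implies some $g$ has finite divergence.
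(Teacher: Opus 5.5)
Your proposal is correct and follows the paper's proof essentially verbatim: combine Lemma~\ref{lem:gauss-exp} with the variational inequality (Lemma~\ref{lem:variational}) applied to $\phi(x)=t_0\lVert x\rVert^2$, and then note that a zero infimum produces some $g\in\mathcal M$ with finite divergence. The only difference is that you also prove the variational inequality yourself, using the truncated tilted density $q_M$ and monotone convergence together with a correct check that the negative part of $f\log(f/g)$ is integrable, whereas the paper simply cites it from \citet{DupuisEllis1997}.
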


\begin{proof}
Fix $g\in\mathcal M$ with $\mathrm{KL}(f\|g)<\infty$. By Lemma~\ref{lem:gauss-exp}, there exists $t_0>0$ such that
\[
\int e^{t_0\left\lVert x \right\rVert^2}g(x)\,\mathrm{d}x<\infty.
\]
Apply Lemma~\ref{lem:variational} with $\phi(x)=t_0\left\lVert x \right\rVert^2$ to obtain
\[
t_0\int \left\lVert x \right\rVert^2 f(x)\,\mathrm{d}x
\le \mathrm{KL}(f\|g)+\log\!\left(\int e^{t_0\left\lVert x \right\rVert^2}g(x)\,\mathrm{d}x\right)<\infty.
\]
Hence $\int \left\lVert x \right\rVert^2 f(x)\,\mathrm{d}x<\infty$.

If $\inf_{g\in\mathcal M}\mathrm{KL}(f\|g)=0$, then in particular there exists $g\in\mathcal M$ with $\mathrm{KL}(f\|g)<1$, and the first part applies.
\end{proof}

\subsection{The meta theorem}

The preceding results separate the universal ingredients from the class-specific verification. The following theorem packages that separation into a single abstract criterion.

\begin{theorem}
\label{thm:metaiff}
Let $\mathcal F$ be a class of densities on $\mathbb{R}^d$ with the following property that for every $f\in\mathcal F$ satisfying $\int \left\lVert x \right\rVert^2 f(x)\,\mathrm{d}x<\infty$, there exists a sequence $(g_m)_{m\ge1}\subset\mathcal M$ such that
\begin{enumerate}[label=\textnormal{(\alph*)}]
    \item $f/g_m\to1$ for $F$-almost every $x$;
    \item the family $\{L_m\}_{m\ge1}$, where $L_m=\log_+(f/g_m)$, is uniformly integrable under $F$.
\end{enumerate}
Then, for every $f\in\mathcal F$,
\[
\inf_{g\in\mathcal M}\mathrm{KL}(f\|g)=0
\quad\Longleftrightarrow\quad
\int_{\mathbb{R}^d}\left\lVert x \right\rVert^2f(x)\,\mathrm{d}x<\infty.
\]
\end{theorem}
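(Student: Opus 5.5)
The two directions are handled separately, and each is essentially a direct appeal to a result already established.

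For the forward direction ($\Rightarrow$), no use of the class $\mathcal F$ is needed. If $\inf_{g\in\mathcal M}\mathrm{KL}(f\|g)=0$, then the second part of Theorem~\ref{thm:necessity} applies verbatim. It gives $\int\left\lVert x\right\rVert^2 f(x)\,\mathrm{d}x<\infty$ for every density $f$, and in particular for every $f\in\mathcal F$.

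For the reverse direction ($\Leftarrow$), I fix $f\in\mathcal F$ with finite second moment. The hypothesis on $\mathcal F$ then supplies a sequence $(g_m)_{m\ge1}\subset\mathcal M$ satisfying (a) and (b). The plan is to feed this sequence into Proposition~\ref{prop:UIKL}, which requires checking its three hypotheses.
\begin{enumerate}[label=\textnormal{(\roman*)}]
\item Every $g\in\mathcal M$ is a finite mixture of nondegenerate Gaussian densities, at least one with positive weight since $\sum_j\pi_j=1$. Each such Gaussian density is strictly positive everywhere, so $g_m(x)>0$ for every $x\in\mathbb{R}^d$. Thus (i) holds everywhere, not just $F$-almost everywhere. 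It also makes $f/g_m$ well defined and finite.
\item This is exactly hypothesis (a).
\item This is exactly hypothesis (b).
\end{enumerate}
Proposition~\ref{prop:UIKL} then yields $\mathrm{KL}(f\|g_m)\to0$. Since each $g_m\in\mathcal M$, we get $0\le\inf_{g\in\mathcal M}\mathrm{KL}(f\|g)\le\lim_m\mathrm{KL}(f\|g_m)=0$, where nonnegativity comes from Gibbs' inequality.

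There is no genuine obstacle here, because the theorem only packages earlier results. The one point needing care is the strict positivity in (i), which holds automatically for finite GMMs as explained above. I would also note that the KL integrals are well defined (possibly $+\infty$ a priori) because the negative part of $\log(f/g_m)$ is $F$-integrable: $\int f\log_-(f/g_m)\le\int g_m\le 1$. This is already implicit in the proof of Proposition~\ref{prop:UIKL}.
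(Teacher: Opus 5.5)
Your proposal is correct and matches the paper's proof: necessity comes from Theorem~\ref{thm:necessity}, and sufficiency comes from feeding the supplied sequence into Proposition~\ref{prop:UIKL}. Your explicit check that every finite GMM is strictly positive everywhere, which verifies hypothesis (i) of that proposition, is a useful detail that the paper leaves implicit.
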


\begin{proof}
If $\int \left\lVert x \right\rVert^2 f(x)\,\mathrm{d}x<\infty$, then by the defining property of $\mathcal F$ there exists a sequence $(g_m)_{m\ge1}\subset\mathcal M$ satisfying the hypotheses of Proposition~\ref{prop:UIKL}. Hence $\mathrm{KL}(f\|g_m)\to0$, which implies
\[
\inf_{g\in\mathcal M}\mathrm{KL}(f\|g)=0.
\]
Conversely, if $\inf_{g\in\mathcal M}\mathrm{KL}(f\|g)=0$, then Theorem~\ref{thm:necessity} gives $\int \left\lVert x \right\rVert^2 f(x)\,\mathrm{d}x<\infty$.
\end{proof}

\begin{remark}
Theorem~\ref{thm:metaiff} identifies the broadest class obtainable by this method once one is willing to define the class through the existence of uniformly-integrable likelihood-ratio approximants. In practice, however, such an existence-based description is too broad to be useful, so the substantive task is to identify concrete subclasses on which the pointwise convergence and uniform-integrability requirements can be verified by transparent structural hypotheses.
\end{remark}

\section{Classes on which equivalence holds}\label{sec:classes}

We now verify the abstract criterion on two concrete families of target densities. The first is a positivity class controlled by a finite log-moment condition. The second is a support-aware class modeled on \citet{NoretsPelenis2012} and its countable-scale extension.

\subsection{A finite log-moment approach}

\begin{definition}[Finite log-moment class]
\label{def:Fent}
Let $\mathcal F_{\mathrm{Ent}}$ denote the class of all continuous strictly positive probability densities $f$ on $\mathbb{R}^d$ such that
\[
\int_{\mathbb{R}^d} f(x)\log_+ f(x)\,\mathrm{d}x<\infty.
\]
\end{definition}

\begin{theorem}[Equivalence on the finite log-moment class]
\label{thm:entclass-UI}
For every $f\in\mathcal F_{\mathrm{Ent}}$,
\[
\inf_{g\in\mathcal M}\mathrm{KL}(f\|g)=0
\quad\Longleftrightarrow\quad
\int_{\mathbb{R}^d}\left\lVert x \right\rVert^2f(x)\,\mathrm{d}x<\infty.
\]
\end{theorem}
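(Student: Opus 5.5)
The plan is to apply Theorem~\ref{thm:metaiff} with $\mathcal F=\mathcal F_{\mathrm{Ent}}$. Necessity is already given by Theorem~\ref{thm:necessity}, so what remains is the construction. Fix $f\in\mathcal F_{\mathrm{Ent}}$ with $\int\lVert x\rVert^2 f\,\mathrm{d}x<\infty$. I will build $(g_m)\subset\mathcal M$ satisfying (a) and (b) of Theorem~\ref{thm:metaiff}, in the defensive form
\[
g_m=(1-\varepsilon_m)h_m+\varepsilon_m\varphi(\cdot;0,I_d),
\]
where $\varepsilon_m\downarrow0$, $h_m\in\mathcal M$, and $h_m$ approximates $f$ uniformly in relative terms on a ball $B_{R_m}$. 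The three parameters are chosen in the order $\varepsilon_m$, then $R_m$, then $h_m$.

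\textbf{Step 1: relative uniform approximation on a compact.} Given $R$ and $\delta>0$, I need $h\in\mathcal M$ with $|h-f|\le\delta f$ on $B_R$. Since $f$ is continuous and strictly positive, $c_R=\min_{B_R}f>0$, so it suffices to get $\sup_{B_R}|h-f|\le\delta c_R$.
\begin{itemize}
\item First take $f_\sigma=f*\varphi(\cdot;0,\sigma^2I_d)$. On $B_R$, the error $|f_\sigma-f|$ is controlled by two pieces. The contribution from $\lVert y-x\rVert\le\eta$ is small by uniform continuity of $f$ on $B_{R+1}$. The contribution from $\lVert y-x\rVert>\eta$ is bounded by $\sup_{\lVert z\rVert>\eta}\varphi(z;0,\sigma^2I_d)+f(x)\,\mathrm{P}(\lVert\sigma W\rVert>\eta)$, which tends to $0$ as $\sigma\to0$.
\item Next, truncate $f$ to a large cube $Q$ and replace $\int_Q f(y)\varphi(x-y;0,\sigma^2I_d)\,\mathrm{d}y$ by a Riemann sum over a fine partition of $Q$. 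This uses weights $\int_{Q_i}f$ and centres $y_i$. Because $\varphi(\cdot;0,\sigma^2I_d)$ is Lipschitz, the discretisation error is uniform in $x$.
\item Finally, renormalise the weights. The truncation and renormalisation errors are at most $\sigma^{-d}(2\pi)^{-d/2}F(Q^c)$, which is made small by enlarging $Q$.
\end{itemize}
This is the technical core, though routine.

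\textbf{Step 2: choice of parameters and pointwise convergence.} Take $\varepsilon_m=1/m$. Choose $R_m\uparrow\infty$ with $\log(1/\varepsilon_m)\,F(B_{R_m}^c)\le 1/m$; this is possible since $F(B_R^c)\to0$. Then take $h_m$ from Step 1 with $R=R_m$ and $\delta=\delta_m\downarrow0$. On $B_{R_m}$,
\[
(1-\varepsilon_m)(1-\delta_m)f\le g_m\le(1+\delta_m)f+\varepsilon_m(2\pi)^{-d/2}.
\]
Each fixed $x$ lies in $B_{R_m}$ eventually, and $f(x)>0$, so $f(x)/g_m(x)\to1$ for every $x$. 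This gives (a), and in particular $g_m>0$ everywhere.

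\textbf{Step 3: uniform integrability.} On $B_{R_m}$ we have $L_m\le\log\bigl(1/((1-\varepsilon_m)(1-\delta_m))\bigr)\le C$. Off $B_{R_m}$, the bound $g_m\ge\varepsilon_m\varphi(x;0,I_d)$ gives
\[
L_m\le\log_+f(x)+\tfrac12\lVert x\rVert^2+\tfrac d2\log(2\pi)+\log(1/\varepsilon_m).
\]
Hence $0\le L_m\le D+Y_m$, where
\[
D=C+\log_+f+\tfrac12\lVert x\rVert^2+\tfrac d2\log(2\pi)
\]
is $F$-integrable by the log-moment and second-moment assumptions, and $Y_m=\log(1/\varepsilon_m)\mathbf 1_{B_{R_m}^c}$ satisfies $\int Y_m\,\mathrm{d}F\le1/m\to0$.

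It remains to check that a family dominated by $D+Y_m$ is uniformly integrable. On $\{L_m>M\}$ we have $L_m\le 2D$ when $D\ge Y_m$ and $L_m\le 2Y_m$ otherwise. Also $\{L_m>M\}\subseteq\{D>M/2\}\cup\{Y_m>M/2\}$. Therefore
\[
\int_{\{L_m>M\}}L_m\,\mathrm{d}F\le 2\int_{\{D>M/2\}}D\,\mathrm{d}F+2\int Y_m\,\mathrm{d}F.
\]
The first term tends to $0$ as $M\to\infty$, uniformly in $m$. The second is at most $2/m$. For the finitely many $m$ with $2/m$ not yet small, each single $L_m\le D+Y_m$ is integrable and so is handled individually. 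This gives (b), and Theorem~\ref{thm:metaiff} yields the equivalence.

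The main obstacle is Step 1: obtaining uniform approximation on $B_R$ by a genuine finite mixture of probability densities. The tail mass of $f$ must be controlled against the kernel height $\sigma^{-d}$, so the order of choices is first $\sigma$, then $Q$, then the mesh of the partition.
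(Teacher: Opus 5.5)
Your proposal is correct and follows essentially the same route as the paper. Both arguments use a defensive mixture of a uniform relative approximant $h_m$ on $B_{R_m}$ with a Gaussian component (you reprove the approximation step, where the paper cites Lemma~\ref{lem:compactapprox}), and both bound $L_m$ on the ball by a vanishing constant and off the ball by $\log_+ f$ plus a quadratic term plus the log of the inverse mixing weight.

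The remaining differences are bookkeeping. The paper ties the weight to the tail mass $\alpha_{R_m}$ and widens the Gaussian to $\varphi(\cdot;0,R_m^2I_d)$, so that $\int L_m\,\mathrm{d}F\to0$ directly; you instead use weight $\varepsilon_m$, a unit Gaussian, and domination by $D+Y_m$. You should take $\varepsilon_m=1/(m+1)$ and $\delta_m\le 1/2$ so that your constant $C$ is genuinely finite: with $\varepsilon_1=1$ it is not, although $L_1$ is still integrable.
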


\begin{proof}
Assume first that
\[
\int_{\mathbb{R}^d}\left\lVert x \right\rVert^2f(x)\,\mathrm{d}x<\infty.
\]
For $R>0$, define the tail quantities
\[
\alpha_R=\int_{B_{R}^c} f(x)\,\mathrm{d}x,
\qquad
\beta_R=\int_{B_{R}^c} f(x)\log_+ f(x)\,\mathrm{d}x,
\qquad
\mu_{2,R}=\int_{B_{R}^c}\left\lVert x \right\rVert^2 f(x)\,\mathrm{d}x.
\]
Since $f\log_+f\in \mathcal L^1(\mathbb{R}^d)$ and $\left\lVert \cdot \right\rVert^2 f\in \mathcal L^1(\mathbb{R}^d)$, we have
\[
\alpha_R\to0,
\qquad
\beta_R\to0,
\qquad
\mu_{2,R}\to0
\qquad (R\to\infty).
\]
Moreover, because $\alpha_R\to0$,
\[
\alpha_R\log\!\left(\frac1{\alpha_R}\right)\to0.
\]
Also $\alpha_R\log R\to0$: if $s_2=\int \left\lVert x \right\rVert^2f(x)\,\mathrm{d}x<\infty$, then Markov's inequality gives $\alpha_R\le s_2/R^2$, hence
\[
0\le \alpha_R\log R\le s_2\frac{\log R}{R^2}\to0.
\]

Choose a sequence $(R_m)_{m\ge1}$ such that $R_m\uparrow\infty$, $R_m\ge1$ for every $m$, and, for every $m\ge1$,
\begin{equation}
\label{eq:Rmchoice}
\begin{aligned}
\alpha_{R_m}&<\tfrac14,
&\beta_{R_m}&<2^{-m-4},
&\alpha_{R_m}\log\!\left(\frac1{\alpha_{R_m}}\right)&<2^{-m-4},\\
\alpha_{R_m}\frac d2\log(2\pi R_m^2)&<2^{-m-4},
&\frac{\mu_{2,R_m}}{2R_m^2}&<2^{-m-4}.
\end{aligned}
\end{equation}
This is possible by the convergences above.

For each $m$, continuity and strict positivity imply
\[
m_m=\inf_{x\in B_{R_m}} f(x)>0.
\]
Apply Lemma~\ref{lem:compactapprox} with radius $R_m$ and tolerance
\[
\eta_m=2^{-m-3}m_m
\]
to obtain a finite GMM $h_m\in\mathcal M$ such that
\begin{equation}
\label{eq:hmclose}
\sup_{x\in B_{R_m}}|h_m(x)-f(x)|<\eta_m.
\end{equation}
In particular,
\begin{equation}
\label{eq:hmlower}
h_m(x)\ge f(x)-\eta_m\ge m_m-2^{-m-3}m_m\ge \frac{m_m}{2}
\qquad \text{for all }x\in B_{R_m}.
\end{equation}
Now define
\[
\phi_m(x)=\varphi(x;0,R_m^2 I_d),
\qquad
g_m(x)=(1-\alpha_{R_m})h_m(x)+\alpha_{R_m}\phi_m(x).
\]
Then each $g_m$ is a finite GMM, hence $g_m\in\mathcal M$.

Fix $x\in\mathbb{R}^d$. For all sufficiently large $m$, $x\in B_{R_m}$, so by \eqref{eq:hmclose},
\[
|h_m(x)-f(x)|\le \eta_m=2^{-m-3}m_m\le 2^{-m-3}f(x)\to0.
\]
Also $\alpha_{R_m}\to0$, and
\[
\phi_m(x)=(2\pi R_m^2)^{-d/2}\exp\!\left(-\frac{\left\lVert x \right\rVert^2}{2R_m^2}\right)\to0.
\]
Therefore
\[
|g_m(x)-f(x)|
\le (1-\alpha_{R_m})|h_m(x)-f(x)| + \alpha_{R_m}f(x)+\alpha_{R_m}\phi_m(x)\to0.
\]
Since $f(x)>0$ for every $x$, it follows that $f(x)/g_m(x)\to1$ pointwise on $\mathbb{R}^d$.

We next estimate $L_m=\log_+(f/g_m)$ in $\mathcal L^1(F)$. Since $g_m\ge (1-\alpha_{R_m})h_m$, we have for $x\in B_{R_m}$,
\[
\log\!\left(\frac{f(x)}{g_m(x)}\right)
\le \log\!\left(\frac{f(x)}{(1-\alpha_{R_m})h_m(x)}\right)
=\log\!\left(\frac{f(x)}{h_m(x)}\right)+\log\!\left(\frac1{1-\alpha_{R_m}}\right).
\]
Hence
\[
L_m(x)\le \left|\log\!\left(\frac{f(x)}{h_m(x)}\right)\right|+\log\!\left(\frac1{1-\alpha_{R_m}}\right).
\]
On $B_{R_m}$, both $f$ and $h_m$ are at least $m_m/2$ by \eqref{eq:hmlower}; therefore
\[
\left|\log\!\left(\frac{f(x)}{h_m(x)}\right)\right|
\le \frac{|f(x)-h_m(x)|}{m_m/2}
\le \frac{2\eta_m}{m_m}=2^{-m-2}.
\]
Using $\log(1/(1-u))\le 2u$ for $u\in[0,1/2]$ and \eqref{eq:Rmchoice}, we conclude that on $B_{R_m}$,
\begin{equation}
\label{eq:compactL1}
\int_{B_{R_m}}L_m\,\mathrm{d}F \le 2^{-m-2}+2\alpha_{R_m}.
\end{equation}

Since $R_m\ge1$ and $x\in B_{R_m}^c$, we have
\[
\phi_m(x)
=(2\pi R_m^2)^{-d/2}\exp\!\left(-\frac{\left\lVert x \right\rVert^2}{2R_m^2}\right)
\le (2\pi R_m^2)^{-d/2}e^{-1/2}
\le (2\pi)^{-d/2}e^{-1/2}<1.
\]
Because $\alpha_{R_m}<1/4$, it follows that
\[
\log\!\left(\frac{1}{\alpha_{R_m}\phi_m(x)}\right)\ge0.
\]
Since $g_m\ge \alpha_{R_m}\phi_m$ on all of $\mathbb{R}^d$, we therefore obtain on $B_{R_m}^c$,
\[
L_m(x)
\le \left(\log f(x)+\log\!\left(\frac1{\alpha_{R_m}\phi_m(x)}\right)\right)_+
\le \log_+ f(x)+\log\!\left(\frac1{\alpha_{R_m}}\right)-\log\phi_m(x).
\]
Because
\[
-\log\phi_m(x)=\frac d2\log(2\pi R_m^2)+\frac{\left\lVert x \right\rVert^2}{2R_m^2},
\]
integration over $B_{R_m}^c$ yields
\begin{equation}
\label{eq:tailL1}
\int_{B_{R_m}^c}L_m\,\mathrm{d}F
\le \beta_{R_m}+\alpha_{R_m}\log\!\left(\frac1{\alpha_{R_m}}\right)+\alpha_{R_m}\frac d2\log(2\pi R_m^2)+\frac{\mu_{2,R_m}}{2R_m^2}.
\end{equation}
By \eqref{eq:Rmchoice}, the right-hand side is at most $2^{-m-2}$.

Combining \eqref{eq:compactL1} and \eqref{eq:tailL1}, and using again \eqref{eq:Rmchoice}, we obtain
\[
\int L_m\,\mathrm{d}F \le 2^{-m-2}+2\alpha_{R_m}+2^{-m-2}\to0.
\]
In particular, $L_m\to0$ in $\mathcal L^1(F)$. To verify uniform integrability directly, let $\varepsilon>0$. Choose $m_0$ so large that
\[
\int L_m\,\mathrm{d}F<\varepsilon
\qquad\text{for all }m\ge m_0.
\]
For the finitely many indices $1\le m<m_0$, choose $M<\infty$ such that
\[
\int_{\{L_m>M\}}L_m\,\mathrm{d}F<\varepsilon
\qquad\text{for all }1\le m<m_0.
\]
Then, for $m\ge m_0$,
\[
\int_{\{L_m>M\}}L_m\,\mathrm{d}F\le \int L_m\,\mathrm{d}F<\varepsilon,
\]
and for $1\le m<m_0$ the same bound holds by construction. Hence
\[
\sup_{m\ge1}\int_{\{L_m>M\}}L_m\,\mathrm{d}F<\varepsilon,
\]
so the family $\{L_m\}_{m\ge1}$ is uniformly integrable. Proposition~\ref{prop:UIKL} therefore gives
\[
\mathrm{KL}(f\|g_m)\to0.
\]
This proves that finite second-moment existence implies KL-approximability and yields the approximating sequence. The implication
\[
\inf_{g\in\mathcal M}\mathrm{KL}(f\|g)=0
\quad\Longrightarrow\quad
\int_{\mathbb{R}^d}\left\lVert x \right\rVert^2f(x)\,\mathrm{d}x<\infty
\]
follows from Theorem~\ref{thm:necessity}.
\end{proof}

\subsection{Orlicz-dominated classes and concrete consequences}

We present the Orlicz formulation first because the bounded and $\mathcal L^p$ subclasses are immediate consequences of it.

\begin{definition}
Let $\Phi:[0,\infty)\to[0,\infty)$ be measurable. We say that $\Phi$ dominates the positive entropy if there exist constants $A,B\ge0$ such that
\[
t\log_+t\le A\Phi(t)+Bt
\qquad\text{for all }t\ge0.
\]
Let $\mathcal F_{\Phi}$ denote the class of continuous strictly positive densities $f$ such that
\[
\int_{\mathbb{R}^d}\Phi(f(x))\,\mathrm{d}x<\infty.
\]
\end{definition}

\begin{corollary}
\label{cor:orlicz}
Suppose $\Phi$ dominates the positive entropy. Then, for every $f\in\mathcal F_{\Phi}$,
\[
\inf_{g\in\mathcal M}\mathrm{KL}(f\|g)=0
\quad\Longleftrightarrow\quad
\int_{\mathbb{R}^d}\left\lVert x \right\rVert^2f(x)\,\mathrm{d}x<\infty.
\]
\end{corollary}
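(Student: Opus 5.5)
The plan is to reduce Corollary~\ref{cor:orlicz} to Theorem~\ref{thm:entclass-UI} by showing that $\mathcal F_\Phi\subseteq\mathcal F_{\mathrm{Ent}}$ whenever $\Phi$ dominates the positive entropy.

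Fix $f\in\mathcal F_\Phi$. By definition $f$ is a continuous, strictly positive probability density with $\int\Phi(f)\,\mathrm{d}x<\infty$. The only thing left to check is the finite log-moment condition. Applying the domination inequality pointwise with $t=f(x)\ge0$ gives $f(x)\log_+f(x)\le A\Phi(f(x))+Bf(x)$ for every $x\in\mathbb{R}^d$. Both sides are nonnegative and measurable, since $\Phi$ is measurable and $f$ is continuous. Integrating over $\mathbb{R}^d$ therefore yields
\[
\int_{\mathbb{R}^d}f\log_+f\,\mathrm{d}x\le A\int_{\mathbb{R}^d}\Phi(f)\,\mathrm{d}x+B\int_{\mathbb{R}^d}f\,\mathrm{d}x=A\int_{\mathbb{R}^d}\Phi(f)\,\mathrm{d}x+B<\infty,
\]
because $f$ integrates to one. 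Hence $f\in\mathcal F_{\mathrm{Ent}}$ in the sense of Definition~\ref{def:Fent}.

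Theorem~\ref{thm:entclass-UI} now applies directly and gives the stated equivalence between $\inf_{g\in\mathcal M}\mathrm{KL}(f\|g)=0$ and $\int\lVert x\rVert^2f(x)\,\mathrm{d}x<\infty$. As in that theorem, the forward direction is really just Theorem~\ref{thm:necessity}, which holds with no assumptions on $f$.

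There is no genuine obstacle here. The one point needing care is that the domination inequality is assumed for all $t\ge0$, including $t\le1$ where $\log_+t=0$, so applying it pointwise is legitimate. Any measurability concern about $\Phi\circ f$ is settled by the measurability of $\Phi$ and the continuity of $f$. The bounded case and the $\mathcal L^p$ case ($p>1$) will then follow by taking $\Phi(t)=t^p$: since $\log_+t\le t^{p-1}/(p-1)$, we get $t\log_+t\le t^p/(p-1)$. For a bounded density $f\le K$ we have $f\log_+f\le(\log_+K)f$ directly.
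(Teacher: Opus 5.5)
Your proposal is correct and follows the paper's proof essentially verbatim. You integrate the pointwise domination $t\log_+t\le A\Phi(t)+Bt$ at $t=f(x)$ to get $f\in\mathcal F_{\mathrm{Ent}}$, and then invoke Theorem~\ref{thm:entclass-UI}, which is exactly the paper's argument (your side remarks on the bounded and $\mathcal L^p$ cases also match Corollaries~\ref{cor:bounded} and~\ref{cor:Lp}).
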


\begin{proof}
If $f\in\mathcal F_{\Phi}$, then
\[
\int f\log_+f\,\mathrm{d}x
\le A\int \Phi(f(x))\,\mathrm{d}x + B\int f(x)\,\mathrm{d}x <\infty.
\]
Therefore $f\in\mathcal F_{\mathrm{Ent}}$, and Theorem~\ref{thm:entclass-UI} applies.
\end{proof}

\begin{definition}
Let $\mathcal F_{\infty}$ be the class of continuous strictly positive densities $f$ on $\mathbb{R}^d$ such that $\|f\|_{\infty}<\infty$.
\end{definition}

\begin{corollary}
\label{cor:bounded}
For every $f\in\mathcal F_{\infty}$,
\[
\inf_{g\in\mathcal M}\mathrm{KL}(f\|g)=0
\quad\Longleftrightarrow\quad
\int_{\mathbb{R}^d}\left\lVert x \right\rVert^2f(x)\,\mathrm{d}x<\infty.
\]
\end{corollary}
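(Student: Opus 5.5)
The plan is to deduce Corollary~\ref{cor:bounded} from Corollary~\ref{cor:orlicz}, or equivalently by checking directly that $\mathcal F_\infty\subseteq\mathcal F_{\mathrm{Ent}}$ and then invoking Theorem~\ref{thm:entclass-UI}. Continuity and strict positivity are part of the definition of $\mathcal F_\infty$, so the only thing to prove is the finite log-moment condition
\[
\int_{\mathbb{R}^d} f(x)\log_+ f(x)\,\mathrm{d}x<\infty .
\]

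To establish it, write $K=\|f\|_\infty<\infty$. The map $t\mapsto\log_+t$ is nondecreasing, so for every $x$ we have $\log_+f(x)\le\log_+K$. Consequently
\[
\int f\log_+ f\,\mathrm{d}x\le \log_+K\int f\,\mathrm{d}x=\log_+K<\infty,
\]
which gives $f\in\mathcal F_{\mathrm{Ent}}$. Theorem~\ref{thm:entclass-UI} then yields the stated equivalence.

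If one prefers to phrase the argument through the Orlicz framework, take $\Phi(t)=t$, with $A=0$ and $B=\log_+K$. Strictly speaking, this choice of $B$ depends on $f$, whereas the domination condition in Corollary~\ref{cor:orlicz} asks for constants that are uniform over $t\ge0$. For that reason the direct route through $\mathcal F_{\mathrm{Ent}}$ is the cleaner one, and it is the one I would present.

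No step here is genuinely difficult. The only point needing care is that the bound $\log_+f\le\log_+K$ holds pointwise everywhere, not just almost everywhere. This is automatic: because $f$ is continuous, its essential supremum coincides with its pointwise supremum.
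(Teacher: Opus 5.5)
Your proposal is correct and is the paper's proof: the pointwise bound $f\log_+ f\le f\log_+\|f\|_\infty$ gives $f\in\mathcal F_{\mathrm{Ent}}$, and Theorem~\ref{thm:entclass-UI} then yields the equivalence. You are also right to set aside the Orlicz route with $\Phi(t)=t$, because $t\log_+ t\le (A+B)t$ fails for large $t$; the direct route through $\mathcal F_{\mathrm{Ent}}$ is the right one.
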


\begin{proof}
Since $f$ is bounded,
\[
f(x)\log_+f(x)\le f(x)\log_+\|f\|_\infty
\]
for all $x$. Hence $\int f\log_+f\,\mathrm{d}x<\infty$, so $f\in\mathcal F_{\mathrm{Ent}}$, and Theorem~\ref{thm:entclass-UI} applies.
\end{proof}

\begin{definition}
For $p>1$, let $\mathcal F_p$ be the class of continuous strictly positive densities $f$ on $\mathbb{R}^d$ such that $f\in \mathcal L^p(\mathbb{R}^d)$.
\end{definition}

\begin{lemma}
\label{lem:plogp}
Let $p>1$. Then for every $t\ge0$,
\[
t\log_+ t\le \frac{t^p}{p-1}.
\]
\end{lemma}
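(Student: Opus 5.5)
The plan is to reduce to a one-variable inequality and split by the size of $t$. For $t\in[0,1]$ we have $\log_+t=0$ (using the convention $\log_+0=0$), so the left-hand side vanishes while the right-hand side $t^p/(p-1)$ is nonnegative, and the inequality is immediate. The only case requiring work is therefore $t>1$, where $\log_+t=\log t$ and the claim becomes $t\log t\le t^p/(p-1)$. Dividing by $t>0$, this is equivalent to
\[
\log t\le \frac{t^{p-1}}{p-1},\qquad t>1.
\]

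To prove this reduced inequality, I will substitute $s=(p-1)\log t>0$, so that $t^{p-1}=e^{s}$. The inequality then reads $s\le e^{s}$, which holds for all real $s$ by the elementary bound $e^{s}\ge 1+s>s$. Equivalently, one can argue with the standard inequality $\log u\le u-1<u$ applied to $u=t^{p-1}$, which gives $(p-1)\log t=\log t^{p-1}<t^{p-1}$. Multiplying back by $t/(p-1)>0$ recovers $t\log t\le t^p/(p-1)$ for $t>1$.

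There is no real obstacle; the only point needing care is the convention at $t=0$ and the sign bookkeeping when dividing by $p-1>0$. The content of the lemma is simply that $\Phi(t)=t^p$ dominates the positive entropy with $A=1/(p-1)$ and $B=0$. This is what will allow $\mathcal F_p\subseteq\mathcal F_\Phi$, and hence the $\mathcal L^p$ equivalence, to follow from Corollary~\ref{cor:orlicz}.
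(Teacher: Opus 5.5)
Your proof is correct and follows the paper's argument: you split at $t=1$ in the same way and reduce to $\log t\le t^{p-1}/(p-1)$ for $t>1$. The only difference is that you also prove this elementary inequality, via $s\le e^{s}$ with $s=(p-1)\log t$, whereas the paper simply invokes it.
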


\begin{proof}
If $0\le t\le1$, then $\log_+ t=0$ and the claim is trivial. If $t\ge1$, the elementary inequality $\log t\le t^{p-1}/(p-1)$ yields
\[
t\log t\le \frac{t^p}{p-1}.
\]
\end{proof}

\begin{corollary} 
\label{cor:Lp}
Let $p>1$. For every $f\in\mathcal F_p$,
\[
\inf_{g\in\mathcal M}\mathrm{KL}(f\|g)=0
\quad\Longleftrightarrow\quad
\int_{\mathbb{R}^d}\left\lVert x \right\rVert^2f(x)\,\mathrm{d}x<\infty.
\]
\end{corollary}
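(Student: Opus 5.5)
The plan is to reduce Corollary~\ref{cor:Lp} to Corollary~\ref{cor:orlicz} by taking $\Phi(t)=t^p$, in the same way Corollary~\ref{cor:bounded} reduced the bounded case to Theorem~\ref{thm:entclass-UI}. First I will check that this $\Phi$ is measurable (indeed continuous) from $[0,\infty)$ to $[0,\infty)$. Then I will show that it dominates the positive entropy. This follows directly from Lemma~\ref{lem:plogp}: for every $t\ge0$,
\[
t\log_+t\le \frac{1}{p-1}\,t^p,
\]
so the domination inequality holds with $A=1/(p-1)$ and $B=0$.

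Next I will show that $\mathcal F_p\subseteq\mathcal F_\Phi$. Take $f\in\mathcal F_p$. By definition $f$ is continuous and strictly positive, and $f\in\mathcal L^p(\mathbb{R}^d)$ means
\[
\int_{\mathbb{R}^d}\Phi(f(x))\,\mathrm{d}x=\int f^p\,\mathrm{d}x=\|f\|_p^p<\infty.
\]
Hence $f\in\mathcal F_\Phi$. Corollary~\ref{cor:orlicz} then gives the stated equivalence between $\inf_{g\in\mathcal M}\mathrm{KL}(f\|g)=0$ and finiteness of the second moment.

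Alternatively, I can bypass Corollary~\ref{cor:orlicz}. Integrating Lemma~\ref{lem:plogp} gives $\int f\log_+f\le \|f\|_p^p/(p-1)<\infty$, so $f\in\mathcal F_{\mathrm{Ent}}$ and Theorem~\ref{thm:entclass-UI} applies directly.

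I do not expect a genuine obstacle, since every step is a direct substitution. The one point to check is that the definition of $\mathcal F_\Phi$ requires nothing beyond continuity, strict positivity and $\int\Phi(f)<\infty$, all of which hold for $f\in\mathcal F_p$.
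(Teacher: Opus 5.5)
Your proposal is correct. Your alternative argument, integrating Lemma~\ref{lem:plogp} to get $f\in\mathcal F_{\mathrm{Ent}}$ and then applying Theorem~\ref{thm:entclass-UI}, is exactly the paper's proof. Your main route through Corollary~\ref{cor:orlicz} with $\Phi(t)=t^p$, $A=1/(p-1)$, $B=0$ reduces to the same argument, because that corollary's proof does nothing more than verify membership in $\mathcal F_{\mathrm{Ent}}$.
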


\begin{proof}
By Lemma~\ref{lem:plogp},
\[
\int f\log_+f\,\mathrm{d}x\le \frac{1}{p-1}\int f(x)^p\,\mathrm{d}x<\infty.
\]
Thus $f\in\mathcal F_{\mathrm{Ent}}$, and Theorem~\ref{thm:entclass-UI} applies.
\end{proof}

\begin{remark}\label{rem:rateclasses}
The target-density classes used in \citet[Theorem~1]{KruijerRousseauvdV2010} and \citet[Definition~2.1]{MaugisRabusseauMichel2013} are contained in $\mathcal F_{\infty}$, and hence in $\mathcal F_{\mathrm{Ent}}$. In both papers, $\log f$ is assumed locally $\beta$-H\"older, which implies continuity of $f$, and strict positivity is assumed explicitly. Their tail conditions then bound $f$ by deterministic envelopes that decay to $0$ in the tails: polynomial or exponential in \citet{KruijerRousseauvdV2010}, and Gaussian in \citet{MaugisRabusseauMichel2013}. Therefore each such density is bounded in the tails, while continuity yields boundedness on every compact set. Hence these classes fall inside $\mathcal F_{\infty}$, so Corollary~\ref{cor:bounded} applies to them.
\end{remark}

\subsection{The fixed-scale support-aware class}

We now turn to the support-aware framework. The external fixed-scale result behind this part of the manuscript is \citet[Theorem~3]{NoretsPelenis2012}. The next definition records the fixed-scale support-regularity part of that theorem. The second-moment assumption is kept separate because it is precisely the universal moment condition whose necessity and sufficiency we track here. The model-side requirement in \citet{NoretsPelenis2012} that arbitrarily small covariance scales be available is absorbed into our ambient class $\mathcal M$.

\begin{definition}[The fixed-scale support-aware class]
\label{def:Sfix}
Let $f$ be a density on $\mathbb{R}^d$. For each $m\ge1$ let
\[
\operatorname{supp}(f)=A^{(m)}_0\sqcup A^{(m)}_1\sqcup\cdots\sqcup A^{(m)}_{m}
\]
be a partition of $\operatorname{supp}(f)$ in which $A^{(m)}_1,\dots,A^{(m)}_{m}$ are adjacent cubes with common side length $h_m>0$ and $A^{(m)}_0$ is the remainder.

We say that $f\in\mathcal F_{\mathrm{FSSA}}$ if:
\begin{enumerate}[label=\textnormal{(\roman*)}]
    \item $f$ is continuous on $\operatorname{supp}(f)$ except on a set of $F$-measure zero;
    \item there exists $r>0$ and, for every $y\in \operatorname{supp}(f)$, a comparison cube $C(r,y)\subset \mathbb{R}^d$ of side length $r$ containing $y$ such that the map
    \[
    D_r(y)=\log\!\left(\frac{f(y)}{\inf_{z\in C(r,y)}f(z)}\right)
    \]
    is measurable on $\operatorname{supp}(f)$ and
    \[
    \int_{\operatorname{supp}(f)} D_r(y)\,\mathrm{d}F(y)<\infty;
    \]
    \item there exists $M\in\mathbb N$ such that for every $m\ge M$:
    \begin{enumerate}[label=\textnormal{(\alph*)}]
        \item if $y\in A^{(m)}_0$, then $C(r,y)\cap A^{(m)}_0$ contains a cube of side $r/2$ with a vertex at $y$;
        \item if $y\in \operatorname{supp}(f)\setminus A^{(m)}_0$, then $C(r,y)\cap \left(\operatorname{supp}(f)\setminus A^{(m)}_0\right)$ contains a cube of side $r/2$ with a vertex at $y$.
    \end{enumerate}
\end{enumerate}
\end{definition}

\begin{theorem}[Equivalence on the fixed-scale support-aware class]
\label{thm:Sfixiff}
If $f\in\mathcal F_{\mathrm{FSSA}}$, then
\[
\inf_{g\in\mathcal M}\mathrm{KL}(f\|g)=0
\quad\Longleftrightarrow\quad
\int_{\mathbb{R}^d}\left\lVert x \right\rVert^2f(x)\,\mathrm{d}x<\infty.
\]
\end{theorem}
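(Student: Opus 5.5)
The plan is to apply Theorem~\ref{thm:metaiff} with $\mathcal F=\mathcal F_{\mathrm{FSSA}}$. The reverse implication is Theorem~\ref{thm:necessity} verbatim, so the work is the forward direction. Fix $f\in\mathcal F_{\mathrm{FSSA}}$ with finite second moment. I will build GMMs $g_m\in\mathcal M$ such that $f/g_m\to1$ $F$-a.e. and $\{L_m\}$ is uniformly integrable under $F$. In practice I will prove $\int L_m\,\mathrm dF\to0$ and then upgrade to uniform integrability by the finite-index argument at the end of the proof of Theorem~\ref{thm:entclass-UI}.

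\textbf{Construction.} I follow \citet{NoretsPelenis2012}. For each $m$, take the partition $A_0^{(m)},\dots,A_m^{(m)}$ and a small scale $\sigma_m\downarrow0$. Set
\[
g_m(x)=\sum_{j=1}^m F(A_j^{(m)})\,\varphi(x;\mu_j^{(m)},\sigma_m^2I_d)+F(A_0^{(m)})\,\varphi(x;0,s_m^2I_d),
\]
where $\mu_j^{(m)}$ is the centre of cube $A_j^{(m)}$ and $s_m$ is a large tail scale. The side $h_m\to0$ is chosen with $\sigma_m/h_m\to0$, and the cubes exhaust the support in the sense that $F(A_0^{(m)})\to0$.

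\textbf{Pointwise convergence.} Heuristically, $g_m$ is close to $f*\varphi_{\sigma_m}$. At every continuity point $y$ of $f$ in $\operatorname{supp}(f)$ (which is $F$-a.e.\ by (i)), the Riemann-sum discretisation and the mollification both converge, so $g_m(y)\to f(y)>0$. This gives hypothesis (a).

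\textbf{Lower envelope for $g_m$.} For uniform integrability, the key is a lower bound valid for $m\ge M$. If $y\notin A_0^{(m)}$, condition (iii)(b) gives a cube of side $r/2$ with vertex $y$ inside $C(r,y)\cap(\operatorname{supp} f\setminus A_0^{(m)})$. This cube contains of order $(r/(2h_m))^d$ partition cubes. Each has mass at least $h_m^d\inf_{C(r,y)}f$, and a fixed fraction of those within distance $\sigma_m$ of $y$ contribute a Gaussian weight of order $\sigma_m^{-d}$. Hence
\[
g_m(y)\ge c_d\inf_{z\in C(r,y)}f(z),
\]
with $c_d>0$ depending only on $d$, at least when $\sigma_m\lesssim r$. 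Therefore $L_m(y)\le D_r(y)+\log(1/c_d)$, which is $F$-integrable by (ii).

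If $y\in A_0^{(m)}$, I use (iii)(a) together with the broad component. This gives
\[
L_m\le \log_+\!\frac{f(y)}{F(A_0^{(m)})\varphi(y;0,s_m^2I_d)},
\]
bounded by $D_r(y)+\log(1/F(A_0^{(m)}))+\frac d2\log(2\pi s_m^2)+\|y\|^2/(2s_m^2)$. To avoid the $\log(1/F(A_0))$ blow-up, I expect to replace the lone broad component with a coarse-grid mixture over $A_0^{(m)}$ at scale $r$. Using (iii)(a), that mixture gives $g_m\gtrsim \inf_{C(r,y)}f$ on $A_0^{(m)}$, with an extra factor $e^{-\|y\|^2/(2s^2)}$ only far out; the finite second moment controls that factor. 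The result is a single dominating function
\[
G(y)=D_r(y)+C_1+C_2\|y\|^2\in\mathcal L^1(F),
\]
with $L_m\le G$ for all $m\ge M$.

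\textbf{Conclusion and main obstacle.} Domination by $G$ plus (a) gives $\int L_m\,\mathrm dF\to0$ by dominated convergence, hence uniform integrability. Proposition~\ref{prop:UIKL} (via Theorem~\ref{thm:metaiff}) then finishes the proof. The main obstacle is the lower-envelope step: producing a bound $g_m\ge c\inf_{C(r,y)}f$ uniform in $m$ on both $A_0^{(m)}$ and its complement, while keeping the mixture weights summing to one. The thickness conditions (iii)(a) and (iii)(b) are exactly what makes this possible, and I would follow the corresponding step of the proof of \citet[Theorem~3]{NoretsPelenis2012} closely there.
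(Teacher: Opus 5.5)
\textbf{The scaling is reversed.} The step that fails as written is your scaling $\sigma_m/h_m\to0$; it has to be $h_m/\sigma_m\to0$. With kernels much narrower than the cubes, $g_m$ is a comb of spikes at the centres $\mu_j^{(m)}$. Take a point $y$ whose distance to every centre is at least $h_m/4$ (such points fill a fixed fraction of each cube) and near which $f$ is continuous. There $g_m(y)/f(y)$ is at most of order $(h_m/\sigma_m)^d e^{-(h_m/\sigma_m)^2/32}\to0$. So hypothesis (a) of Theorem~\ref{thm:metaiff} fails, and $\int L_m\,\mathrm{d}F$ in fact diverges.

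Your own envelope count (``a fixed fraction of those within distance $\sigma_m$ of $y$'') already assumes about $(\sigma_m/h_m)^d$ cube centres within distance $\sigma_m$ of $y$, i.e.\ $h_m\ll\sigma_m$. Both the Riemann-sum approximation of $f*\varphi(\cdot;0,\sigma_m^2I_d)$ and the bound $g_m(y)\ge c_d\inf_{C(r,y)}f$ need $h_m/\sigma_m\to0$, $\sigma_m\to0$ and $\sigma_m\lesssim r$.

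\textbf{You cannot choose the partition.} In Definition~\ref{def:Sfix} the partitions are witnesses, so $h_m$ is not yours to pick. Nothing there forces $h_m\to0$ or $F(A^{(m)}_0)\to0$. A robust repair is to use (iii) at a single $m\ge M$ only to extract, for each $y\in\operatorname{supp}(f)$, an anchored cube $Q_y\subset C(r,y)\cap\operatorname{supp}(f)$ of side $r/2$, exactly as in the proof of Lemma~\ref{lem:Sfix-entropy}. Then discretise on your own grid of mesh $h_m\ll\sigma_m$ over balls $B_{R_m}$ with $R_m\to\infty$.

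\textbf{The remainder step is not carried out.} Your single dominating function $G=D_r+C_1+C_2\lVert y\rVert^2$ would still need a broad component on the unbounded part whose weight $w_m$ tends to $0$, since a fixed weight would destroy (a). You do not show how the resulting $\log(1/w_m)$ is absorbed.

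You do not need $G$ at all, because uniform integrability follows from $\int L_m\,\mathrm{d}F\to0$. Let $E_m$ be the part of $\operatorname{supp}(f)$ where the envelope is unavailable (essentially $\operatorname{supp}(f)\setminus B_{R_m-\sqrt d\,r}$). Give a fixed broad component $\varphi(\cdot;0,s^2I_d)$ the weight $w_m=F(E_m)\to0$. Then
\[
\int_{E_m}L_m\,\mathrm{d}F\le\int_{E_m}\log_+f\,\mathrm{d}F+F(E_m)\log\frac{1}{F(E_m)}+\frac d2\log_+(2\pi s^2)\,F(E_m)+\frac{1}{2s^2}\int_{E_m}\lVert y\rVert^2\,\mathrm{d}F(y)\to0
\]
by Lemma~\ref{lem:Sfix-entropy} and the finite second moment, exactly as in the tail estimate of Theorem~\ref{thm:entclass-UI}. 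Off $E_m$, dominated convergence with the bound $D_r+\log(2/c_d)$ applies.

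\textbf{Comparison with the paper.} With these repairs, your route through Theorem~\ref{thm:metaiff} becomes a self-contained proof. The paper does none of this. It checks that Definition~\ref{def:Sfix} reproduces the continuity, oscillation and support-geometry hypotheses of \citet[Theorem~3]{NoretsPelenis2012} with $Y=\operatorname{supp}(f)$. It notes that $\mathcal M$ contains $\varphi(\cdot;\mu,\sigma^2I_d)$ for every $\sigma>0$ and cites the conclusion; the converse is Theorem~\ref{thm:necessity}. Since your sketch defers the key envelope step to that paper anyway, citing it directly is the shorter complete argument.
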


\begin{proof}
Assume first that
\[
\int_{\mathbb{R}^d}\left\lVert x \right\rVert^2f(x)\,\mathrm{d}x<\infty.
\]
By Definition~\ref{def:Sfix}, the density $f$ satisfies the continuity, fixed-scale oscillation, and support-geometry hypotheses in \citet[Theorem~3]{NoretsPelenis2012}, with $Y=\operatorname{supp}(f)$. The remaining model-side requirement in \citet[Theorem~3]{NoretsPelenis2012} is that arbitrarily small covariance scales be available. This is automatic for our ambient class $\mathcal M$, since finite Gaussian mixtures with covariance matrices $\sigma^2 I_d$ belong to $\mathcal M$ for every $\sigma>0$. Therefore, for every $\varepsilon>0$ there exists $g\in\mathcal M$ such that
\[
\mathrm{KL}(f\|g)<\varepsilon,
\]
and hence
\[
\inf_{g\in\mathcal M}\mathrm{KL}(f\|g)=0.
\]
The converse implication is exactly the necessity statement from Theorem~\ref{thm:necessity}.
\end{proof}

\begin{remark}
The support-geometry condition is a local thickness requirement at the working scale $r$. It says that when the comparison cube $C(r,y)$ meets either the remainder region or the cubical part of the partition, the relevant side of the support still contains a subcube of side $r/2$ anchored at $y$. In particular, the condition rules out pieces of the support that, at scale $r$, become too thin to contain such an anchored subcube or that break locally into fragments separated by gaps inside the comparison cube. This is exactly what allows the lower-envelope argument of \citet{NoretsPelenis2012} to work.
\end{remark}

\subsection{A countable-scale support-aware class}

We now allow the fixed local scale to vary across a measurable partition of the support. The definition is built by requiring the normalized restriction of the density to each partition piece to satisfy the fixed-scale support-aware condition, with summability across the pieces strong enough to recover the same KL equivalence.

\begin{definition}[The countable-scale support-aware class]
\label{def:Scnt}
Let $f$ be a density on $\mathbb{R}^d$. We say that $f\in\mathcal F_{\mathrm{CSSA}}$ if there exist:
\begin{enumerate}[label=\textnormal{(\roman*)}]
    \item a measurable partition
    \[
    \operatorname{supp}(f)=\bigsqcup_{\ell\ge1} Y_\ell,
    \qquad
    p_\ell=F(Y_\ell)\ge0;
    \]
    \item for each $\ell$ with $p_\ell>0$, a scale $r_\ell>0$, a family of comparison cubes $C_\ell(y)\subset \mathbb{R}^d$ of side length $r_\ell$ containing $y$ for $y\in Y_\ell$, and, for every $m\ge1$, a partition
    \[
    Y_\ell=A^{(m)}_{\ell,0}\sqcup A^{(m)}_{\ell,1}\sqcup\cdots\sqcup A^{(m)}_{\ell,m}
    \]
    in which $A^{(m)}_{\ell,1},\dots,A^{(m)}_{\ell,m}$ are adjacent cubes with common side length $h_{\ell,m}>0$ and $A^{(m)}_{\ell,0}$ is the remainder,
\end{enumerate}
such that, for every $\ell$ with $p_\ell>0$, the normalized restriction
\[
f_\ell(x)=\frac{f(x)\mathbf{1}_{Y_\ell}(x)}{p_\ell}
\]
satisfies Definition~\ref{def:Sfix} on $Y_\ell$ with witness scale $r_\ell$, comparison cubes $C_\ell(y)$, and partitions $A^{(m)}_{\ell,0},\dots,A^{(m)}_{\ell,m}$, and such that, writing $F_\ell$ for the probability measure with density $f_\ell$ and
\[
D_\ell(y)=\log\!\left(\frac{f_\ell(y)}{\inf_{z\in C_\ell(y)} f_\ell(z)}\right),
\qquad y\in Y_\ell,
\]
one has
\[
\sum_{\ell:p_\ell>0} p_\ell\int_{Y_\ell} D_\ell(y)\,\mathrm{d}F_\ell(y)<\infty
\]
and
\[
\sum_{\ell:p_\ell>0} p_\ell\log_+\!\left(\frac{1}{r_\ell}\right)<\infty.
\]
Zero-mass pieces are allowed and may be ignored.
\end{definition}

\begin{remark}
Taking a single piece $Y_1=\operatorname{supp}(f)$ and $p_1=1$, together with the original witnesses from Definition~\ref{def:Sfix}, shows immediately that
\[
\mathcal F_{\mathrm{FSSA}}\subset \mathcal F_{\mathrm{CSSA}}.
\]
\end{remark}

\begin{remark}
Relative to $\mathcal F_{\mathrm{FSSA}}$, the new flexibility is that the admissible local scale may vary from piece to piece. The two summability conditions ensure that the piecewise oscillation integrals and the admissible scales do not deteriorate too quickly on pieces carrying non-negligible mass.
\end{remark}

We first record the fixed-scale entropy bound needed in the proof of Proposition~\ref{prop:Scnt-entropy}.

\begin{lemma}[Entropy bound on the fixed-scale class]
\label{lem:Sfix-entropy}
Let $q\in\mathcal F_{\mathrm{FSSA}}$, let $Q$ denote the probability measure with density $q$, and let $r>0$, $D_r$, $C(r,y)$, $A^{(m)}_0,\dots,A^{(m)}_m$, and $M$ be witnesses for Definition~\ref{def:Sfix}. Then
\[
q\log_+ q\in \mathcal L^1(\mathbb{R}^d)
\]
and
\[
\int_{\mathbb{R}^d} q(y)\log_+ q(y)\,\mathrm{d}y
\le
\int_{\operatorname{supp}(q)} D_r(y)\,\mathrm{d}Q(y)+d\log_+\!\left(\frac{2}{r}\right).
\]
\end{lemma}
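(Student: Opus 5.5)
Fix $y\in\operatorname{supp}(q)$ and write $q(y)=\exp(D_r(y))\,\inf_{z\in C(r,y)}q(z)$, which is the definition of $D_r$. The plan is to bound the infimum by an average of $q$ over a cube of fixed volume that sits inside $C(r,y)$. Because $q$ is a probability density, that average is at most the reciprocal of the cube's volume.

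The right choice of cube is the one supplied by condition (iii) of Definition~\ref{def:Sfix}. Fix any $m\ge M$. If $y\in A^{(m)}_0$, condition (iii)(a) gives a cube of side $r/2$ with a vertex at $y$, contained in $C(r,y)\cap A^{(m)}_0\subset C(r,y)$. If $y\in \operatorname{supp}(q)\setminus A^{(m)}_0$, condition (iii)(b) gives such a cube inside $C(r,y)$. In either case there is a cube $K_y\subset C(r,y)$ with $\lambda(K_y)=(r/2)^d$. Hence
\[
\inf_{z\in C(r,y)}q(z)\le \inf_{z\in K_y}q(z)\le \frac{1}{\lambda(K_y)}\int_{K_y}q\,\mathrm{d}z\le \Big(\frac{2}{r}\Big)^{d}.
\]
The comparison cube $C(r,y)$ alone would also do, with bound $r^{-d}$. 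Using the half-size cube matches the stated constant $d\log(2/r)$ and makes it clear why (iii) is invoked. The side-$r$ cube gives a slightly stronger bound, so either route is fine.

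Taking logarithms gives $\log q(y)\le D_r(y)+d\log(2/r)$. Since $D_r(y)\ge0$ (the infimum over a set containing $y$ is at most $q(y)$), it follows that
\[
\log_+ q(y)\le D_r(y)+d\log_+(2/r)\qquad\text{for every }y\in\operatorname{supp}(q).
\]
Off the support, $q\log_+q=0$. Multiplying by $q(y)$ and integrating gives
\[
\int_{\mathbb{R}^d} q\log_+q\,\mathrm{d}y\le \int_{\operatorname{supp}(q)} D_r\,\mathrm{d}Q+d\log_+(2/r)\int q\,\mathrm{d}y .
\]
The last integral equals $1$, which yields the stated bound. The right-hand side is finite by condition (ii), so $q\log_+q\in\mathcal L^1$.

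The only delicate points are measurability and degenerate cases, and I expect these to be the main (minor) obstacle. Measurability of $D_r$ is assumed in (ii), and $q\log_+q$ is measurable because $q$ is. If $\inf_{C(r,y)}q=0$ then $D_r(y)=+\infty$, and the inequality holds trivially. Such points form an $F$-null set, since $\int D_r\,\mathrm{d}Q<\infty$, so they do not affect the integral.
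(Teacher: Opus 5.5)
Your proof is correct and follows the paper's argument essentially line by line. The side-$r/2$ cube from condition (iii) gives $\inf_{C(r,y)}q\le(2/r)^d$ because $q$ integrates to at most one, hence $\log_+ q\le D_r+d\log_+(2/r)$ pointwise, and integrating against $Q$ finishes. Your side remark is also right: averaging over $C(r,y)$ itself already gives $\inf_{C(r,y)}q\le r^{-d}$, so condition (iii) is not actually needed, nor is the paper's observation that the subcube lies in $\operatorname{supp}(q)$, and the constant improves to $d\log_+(1/r)$.
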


\begin{proof}
Fix $m=M$. If $y\in A^{(M)}_0$, then Definition~\ref{def:Sfix}\textnormal{(iii)(a)} yields a cube $Q_y\subset C(r,y)\cap A^{(M)}_0$ of side $r/2$ with a vertex at $y$. If $y\in \operatorname{supp}(q)\setminus A^{(M)}_0$, then Definition~\ref{def:Sfix}\textnormal{(iii)(b)} yields a cube $Q_y\subset C(r,y)\cap \left(\operatorname{supp}(q)\setminus A^{(M)}_0\right)$ of side $r/2$ with a vertex at $y$. In either case $Q_y\subset \operatorname{supp}(q)$, so
\[
1\ge \int_{Q_y} q(z)\,\mathrm{d}z
\ge \left(\frac{r}{2}\right)^d\inf_{z\in Q_y} q(z)
\ge \left(\frac{r}{2}\right)^d\inf_{z\in C(r,y)} q(z).
\]
Hence
\[
q(y)\le \frac{q(y)}{\inf_{z\in C(r,y)}q(z)}\left(\frac{2}{r}\right)^d.
\]
Since $q(y)/\inf_{z\in C(r,y)}q(z)\ge1$, it follows that
\[
\log_+ q(y)\le D_r(y)+d\log_+\!\left(\frac{2}{r}\right)
\]
for $Q$-almost every $y\in \operatorname{supp}(q)$. Integrating gives the stated bound.
\end{proof}

\begin{proposition}
\label{prop:Scnt-entropy}
If $f\in\mathcal F_{\mathrm{CSSA}}$, then $f\log_+f\in \mathcal L^1(\mathbb{R}^d)$. More precisely,
\[
\int_{\mathbb{R}^d} f(y)\log_+ f(y)\,\mathrm{d}y
\le
\sum_{\ell:p_\ell>0} p_\ell\int_{Y_\ell} D_\ell(y)\,\mathrm{d}F_\ell(y)
+d\sum_{\ell:p_\ell>0} p_\ell\log_+\!\left(\frac{2}{r_\ell}\right).
\]
In particular,
\[
\int_{\mathbb{R}^d} f(y)\log_+ f(y)\,\mathrm{d}y<\infty.
\]
\end{proposition}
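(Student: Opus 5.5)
The plan is to decompose the integral along the partition $\operatorname{supp}(f)=\bigsqcup_{\ell\ge1}Y_\ell$ from Definition~\ref{def:Scnt} and to apply Lemma~\ref{lem:Sfix-entropy} to each normalized restriction $f_\ell$ separately.

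Since $f\log_+ f$ vanishes off $\operatorname{supp}(f)$ and is nonnegative, monotone convergence for the countable disjoint union gives
\[
\int_{\mathbb{R}^d} f\log_+ f\,\mathrm{d}y=\sum_{\ell\ge1}\int_{Y_\ell} f\log_+ f\,\mathrm{d}y .
\]
Pieces with $p_\ell=0$ contribute nothing. Indeed, $\int_{Y_\ell}f\,\mathrm{d}y=0$ with $f>0$ on $Y_\ell$ forces $\lambda(Y_\ell)=0$, so the integrand vanishes almost everywhere there. Only indices with $p_\ell>0$ therefore remain.

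For such an index, on $Y_\ell$ we have $f=p_\ell f_\ell$ with $0<p_\ell\le1$. Because $\log_+$ is nondecreasing, $\log_+(p_\ell f_\ell)\le\log_+ f_\ell$. Hence
\[
\int_{Y_\ell} f\log_+ f\,\mathrm{d}y\le p_\ell\int_{Y_\ell} f_\ell\log_+ f_\ell\,\mathrm{d}y
= p_\ell\int_{\mathbb{R}^d} f_\ell\log_+ f_\ell\,\mathrm{d}y,
\]
where the last equality uses that $f_\ell$ vanishes off $Y_\ell$.

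By Definition~\ref{def:Scnt}, $f_\ell$ satisfies Definition~\ref{def:Sfix} with witnesses $r_\ell$, $C_\ell(y)$ and the partitions $A^{(m)}_{\ell,j}$, and $\operatorname{supp}(f_\ell)=Y_\ell$. So $f_\ell\in\mathcal F_{\mathrm{FSSA}}$, and Lemma~\ref{lem:Sfix-entropy} applied with $q=f_\ell$, $Q=F_\ell$ and $D_r=D_\ell$ yields
\[
\int f_\ell\log_+ f_\ell\,\mathrm{d}y\le\int_{Y_\ell}D_\ell\,\mathrm{d}F_\ell+d\log_+\!\left(\frac{2}{r_\ell}\right).
\]
Multiplying by $p_\ell$ and summing over $\ell$ with $p_\ell>0$ gives exactly the displayed inequality of Proposition~\ref{prop:Scnt-entropy}.

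For finiteness, the first sum is finite by hypothesis. For the second, I use $\log_+(2/r)\le\log 2+\log_+(1/r)$, which holds since $\log_+(ab)\le\log_+a+\log_+b$. This bounds the second sum by $d\log 2\sum_\ell p_\ell+d\sum_\ell p_\ell\log_+(1/r_\ell)\le d\log2+d\sum_\ell p_\ell\log_+(1/r_\ell)<\infty$, using $\sum_\ell p_\ell=1$ and the second summability condition.

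The only delicate point I expect is verifying that Lemma~\ref{lem:Sfix-entropy} genuinely applies to each $f_\ell$. One must check that the infimum in $D_\ell$ is taken of $f_\ell$, which is zero off $Y_\ell$, over the cube $C_\ell(y)$, matching the convention of Definition~\ref{def:Sfix} for the density $f_\ell$. One must also check that the anchored subcubes produced in that lemma lie inside $\operatorname{supp}(f_\ell)=Y_\ell$. Both are built into the requirement that $f_\ell$ satisfy Definition~\ref{def:Sfix} on $Y_\ell$, so the step is a bookkeeping check rather than a real obstruction. Everything else is monotonicity of $\log_+$ and monotone convergence.
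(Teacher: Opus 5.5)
Your proof is correct and follows essentially the same route as the paper: restrict to each piece $Y_\ell$ with $p_\ell>0$, use $\log_+(p_\ell f_\ell)\le\log_+ f_\ell$, apply Lemma~\ref{lem:Sfix-entropy} to $f_\ell$, sum over $\ell$, and bound $\log_+(2/r_\ell)\le\log 2+\log_+(1/r_\ell)$ together with $\sum_\ell p_\ell=1$. You also justify explicitly why zero-mass pieces contribute nothing and why the integral splits over the countable partition, small steps the paper leaves implicit.
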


\begin{proof}
Discard any zero-mass pieces and relabel the remaining pieces so that $p_\ell>0$ for every index used below. On $Y_\ell$ one has $f=p_\ell f_\ell$, and since $0<p_\ell\le1$,
\[
\log_+\!\left(p_\ell f_\ell(y)\right)\le \log_+ f_\ell(y).
\]
Therefore
\[
\int_{Y_\ell} f(y)\log_+ f(y)\,\mathrm{d}y
\le p_\ell\int_{Y_\ell} f_\ell(y)\log_+ f_\ell(y)\,\mathrm{d}y.
\]
Applying Lemma~\ref{lem:Sfix-entropy} to the fixed-scale witness of $f_\ell$ gives
\[
\int_{Y_\ell} f_\ell(y)\log_+ f_\ell(y)\,\mathrm{d}y
\le
\int_{Y_\ell} D_\ell(y)\,\mathrm{d}F_\ell(y)+d\log_+\!\left(\frac{2}{r_\ell}\right).
\]
Multiplying by $p_\ell$ and summing over $\ell$ yields
\[
\int_{\mathbb{R}^d} f(y)\log_+ f(y)\,\mathrm{d}y
\le
\sum_{\ell\ge1} p_\ell\int_{Y_\ell} D_\ell(y)\,\mathrm{d}F_\ell(y)
+d\sum_{\ell\ge1} p_\ell\log_+\!\left(\frac{2}{r_\ell}\right).
\]
Finally,
\[
\log_+\!\left(\frac{2}{r_\ell}\right)\le \log 2+\log_+\!\left(\frac{1}{r_\ell}\right),
\]
and $\sum_{\ell\ge1}p_\ell=1$, so the right-hand side is finite by Definition~\ref{def:Scnt}.
\end{proof}

\begin{lemma}[Mixture convexity of the KL divergence]
\label{lem:KL-convex}
Let $a_1,\dots,a_L\ge0$ with $\sum_{i=1}^L a_i=1$, let $q_i$ be densities on $\mathbb{R}^d$, and let $r_i$ be strictly positive densities on $\mathbb{R}^d$. Then
\[
\mathrm{KL}\!\left(\sum_{i=1}^L a_i q_i \middle\| \sum_{i=1}^L a_i r_i\right)
\le \sum_{i=1}^L a_i\mathrm{KL}(q_i\|r_i).
\]
\end{lemma}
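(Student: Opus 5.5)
The plan is to prove the inequality pointwise and then integrate, using the log-sum inequality as the key tool. Write $q=\sum_i a_i q_i$ and $r=\sum_i a_i r_i$. Since each $r_i>0$, every term with $a_i>0$ is strictly positive, so $r>0$ everywhere and $\log(q/r)$ is well defined wherever $q>0$. We may discard indices with $a_i=0$; they contribute nothing to either side. We may also assume the right-hand side is finite, since otherwise there is nothing to prove.

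\textbf{Pointwise step.} Fix $x$ and apply the log-sum inequality to the nonnegative numbers $u_i=a_iq_i(x)$ and the positive numbers $v_i=a_ir_i(x)$:
\[
\Bigl(\sum_i u_i\Bigr)\log\frac{\sum_i u_i}{\sum_i v_i}\le \sum_i u_i\log\frac{u_i}{v_i},
\]
with the convention $0\log 0=0$. This follows from convexity of $t\mapsto t\log t$ and Jensen's inequality with weights $v_i/\sum_j v_j$, applied to the points $t_i=u_i/v_i$. The common factors $a_i$ cancel inside the logarithms on the right. This gives
\[
q(x)\log\frac{q(x)}{r(x)}\le \sum_i a_i\, q_i(x)\log\frac{q_i(x)}{r_i(x)}
\]
for every $x$.

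\textbf{Integration step (main obstacle).} The only delicate point is justifying the integration, because the integrands can take both signs. Each $\mathrm{KL}(q_i\|r_i)$ is defined as an integral whose negative part is automatically integrable. Indeed, $(q_i\log(q_i/r_i))_-\le r_i$, because $q_i\log(q_i/r_i)\ge q_i-r_i\ge -r_i$. Similarly, $(q\log(q/r))_-\le r$, which is integrable.

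So both sides of the pointwise inequality have integrable negative parts. Finiteness of the right-hand side then means each positive part with $a_i>0$ is integrable. Integrating the pointwise inequality, and using linearity for functions with integrable negative parts, yields
\[
\mathrm{KL}(q\|r)\le \sum_i a_i\,\mathrm{KL}(q_i\|r_i).
\]
This is the claimed inequality.
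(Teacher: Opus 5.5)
Your proof is correct and follows the paper's argument: discard indices with $a_i=0$, apply the log-sum inequality pointwise to $a_iq_i(x)$ and $a_ir_i(x)$, cancel the $a_i$ inside the logarithms, and integrate. Your extra justification of the integration step, via $q_i\log(q_i/r_i)\ge q_i-r_i\ge -r_i$ so that all negative parts are integrable, is a detail the paper leaves implicit.
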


\begin{proof}
If $\sum_{i=1}^L a_i\mathrm{KL}(q_i\|r_i)=\infty$, there is nothing to prove. Discard any indices with $a_i=0$, since they contribute zero to both sides of the desired inequality. Thus we may assume that $a_i>0$ for every $i$ under consideration. Applying Lemma~\ref{lem:logsum} pointwise with
\[
\alpha_i(x)=a_i q_i(x),
\qquad
\beta_i(x)=a_i r_i(x)
\qquad (1\le i\le L),
\]
we obtain for almost every $x\in\mathbb{R}^d$ that
\[
\left(\sum_{i=1}^L a_i q_i(x)\right)\log\!\left(\frac{\sum_{i=1}^L a_i q_i(x)}{\sum_{i=1}^L a_i r_i(x)}\right)
\le \sum_{i=1}^L a_i q_i(x)\log\!\left(\frac{a_i q_i(x)}{a_i r_i(x)}\right).
\]
Since each $a_i>0$, the factor $a_i$ cancels inside the logarithm, and therefore
\[
\left(\sum_{i=1}^L a_i q_i(x)\right)\log\!\left(\frac{\sum_{i=1}^L a_i q_i(x)}{\sum_{i=1}^L a_i r_i(x)}\right)
\le \sum_{i=1}^L a_i q_i(x)\log\!\left(\frac{q_i(x)}{r_i(x)}\right).
\]
Integrating over $\mathbb{R}^d$ gives the stated inequality.
\end{proof}

\begin{theorem}
\label{thm:Scntiff}
If $f\in\mathcal F_{\mathrm{CSSA}}$, then
\[
\inf_{g\in\mathcal M}\mathrm{KL}(f\|g)=0
\quad\Longleftrightarrow\quad
\int_{\mathbb{R}^d}\left\lVert x \right\rVert^2f(x)\,\mathrm{d}x<\infty.
\]
\end{theorem}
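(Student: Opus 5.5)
The converse direction is immediate from Theorem~\ref{thm:necessity}, so the work is the forward implication. Assume $\int\|x\|^2f<\infty$.

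The plan is to truncate the countable partition to finitely many pieces and approximate each piece by Theorem~\ref{thm:Sfixiff}. We then glue the pieces with Lemma~\ref{lem:KL-convex}, and control the discarded tail pieces with a single wide Gaussian, much as in the proof of Theorem~\ref{thm:entclass-UI}.

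\textbf{Step 1: each piece satisfies the hypotheses of Theorem~\ref{thm:Sfixiff}.} Discard zero-mass pieces. For each $\ell$ with $p_\ell>0$, the normalized restriction $f_\ell$ lies in $\mathcal F_{\mathrm{FSSA}}$ by Definition~\ref{def:Scnt}. Its second moment is at most $p_\ell^{-1}\int\|x\|^2f<\infty$. Hence Theorem~\ref{thm:Sfixiff} gives, for every $\varepsilon>0$, some $g_\ell\in\mathcal M$ with $\mathrm{KL}(f_\ell\|g_\ell)<\varepsilon$.

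\textbf{Step 2: truncation and construction of the approximant.} Fix $\varepsilon>0$ and choose $L$ so that the tail mass $\tau_L=\sum_{\ell>L}p_\ell$ is small. Let $f_{>L}=\tau_L^{-1}\sum_{\ell>L}p_\ell f_\ell$ denote the normalized tail, and assume $\tau_L>0$ (otherwise the tail is absent). Then
\[
f=\sum_{\ell\le L}p_\ell f_\ell+\tau_L f_{>L}.
\]
Set
\[
g=\sum_{\ell\le L}p_\ell g_\ell+\tau_L\phi,
\qquad \phi=\varphi(\cdot;0,\sigma^2I_d),
\]
which is a finite GMM. Each component of $g$ is strictly positive, so Lemma~\ref{lem:KL-convex} applies and gives
\[
\mathrm{KL}(f\|g)\le\sum_{\ell\le L}p_\ell\,\mathrm{KL}(f_\ell\|g_\ell)+\tau_L\,\mathrm{KL}(f_{>L}\|\phi)
<\varepsilon+\tau_L\,\mathrm{KL}(f_{>L}\|\phi).
\]

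\textbf{Step 3: the tail term (the main obstacle).} It remains to show that $\tau_L\,\mathrm{KL}(f_{>L}\|\phi)\to0$ as $L\to\infty$, with $\sigma$ fixed, say $\sigma=1$. Expanding the divergence,
\[
\tau_L\,\mathrm{KL}(f_{>L}\|\phi)
=\int_{T_L}f\log f\,\mathrm{d}x+\tau_L\log\frac1{\tau_L}+\tau_L\frac d2\log(2\pi\sigma^2)+\frac1{2\sigma^2}\int_{T_L}\|x\|^2f\,\mathrm{d}x,
\]
where $T_L=\bigcup_{\ell>L}Y_\ell$. The second, third and fourth terms tend to $0$: we have $\tau_L\to0$ and $\|x\|^2f\in\mathcal L^1$, and the sets $T_L$ decrease to a null set.

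For the first term, bound $\int_{T_L}f\log f\le\int_{T_L}f\log_+f$. By Proposition~\ref{prop:Scnt-entropy}, $f\log_+f\in\mathcal L^1$, so dominated convergence sends this to $0$. The negative part of $f\log f$ only helps the upper bound, but it must not make $\mathrm{KL}(f_{>L}\|\phi)$ undefined. That part is integrable because finite second moment implies $\int f\log_-f<\infty$, by the standard Gaussian-comparison argument (Lemma~\ref{lem:variational}, or Gibbs' inequality against $\phi$). More simply, we only need the upper bound, since KL is nonnegative.

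The summability conditions of Definition~\ref{def:Scnt} are therefore used only through Proposition~\ref{prop:Scnt-entropy}. The care needed is in checking that the entropy decomposition above is legitimate when $f_{>L}\log f_{>L}$ has an integrable negative part, and that the choice of $L$ can be made before choosing the piecewise approximants $g_\ell$.

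\textbf{Conclusion.} Letting $L\to\infty$ and then $\varepsilon\to0$ gives $\inf_{g\in\mathcal M}\mathrm{KL}(f\|g)=0$.
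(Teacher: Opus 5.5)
Your proposal is correct and follows essentially the same route as the paper. You truncate the partition, approximate the first $L$ normalized pieces via Theorem~\ref{thm:Sfixiff}, absorb the tail mass with a standard Gaussian, glue via Lemma~\ref{lem:KL-convex}, and send the tail term to zero using Proposition~\ref{prop:Scnt-entropy}, the second moment, and $p_{>L}\log(1/p_{>L})\to0$. The only cosmetic difference is that you use a uniform per-piece tolerance $\varepsilon$ rather than $\varepsilon 2^{-\ell-1}$, which works equally well since $\sum_\ell p_\ell\le 1$.
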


\begin{proof}
Discard any zero-mass pieces and relabel the remaining pieces so that $p_\ell>0$ for every index used below. This does not affect the defining conditions.

Assume first that
\[
\int_{\mathbb{R}^d}\left\lVert x \right\rVert^2f(x)\,\mathrm{d}x<\infty.
\]
For each $\ell\ge1$, Definition~\ref{def:Scnt} gives $f_\ell\in\mathcal F_{\mathrm{FSSA}}$. Also,
\[
\int_{\mathbb{R}^d}\left\lVert x \right\rVert^2 f_\ell(x)\,\mathrm{d}x
=\frac{1}{p_\ell}\int_{Y_\ell}\left\lVert x \right\rVert^2f(x)\,\mathrm{d}x<\infty.
\]
Hence Theorem~\ref{thm:Sfixiff} implies that for every $\ell$ and every $\varepsilon>0$ there exists $g_\ell\in\mathcal M$ such that
\[
\mathrm{KL}(f_\ell\|g_\ell)<\varepsilon 2^{-\ell-1}.
\]
Fix the standard Gaussian density
\[
h(x)=\varphi(x;0,I_d).
\]
For $L\ge1$, define
\[
B_L=\bigcup_{\ell>L}Y_\ell,
\qquad
p_{>L}=F(B_L)=\sum_{\ell>L}p_\ell,
\qquad
\nu_L(x)=\frac{f(x)\mathbf{1}_{B_L}(x)}{p_{>L}}
\]
when $p_{>L}>0$; if $p_{>L}=0$, let $\nu_L=h$. Also define the finite GMM
\[
G_L(x)=\sum_{\ell=1}^L p_\ell g_\ell(x)+p_{>L}h(x).
\]
Because
\[
f(x)=\sum_{\ell=1}^L p_\ell f_\ell(x)+p_{>L}\nu_L(x),
\]
Lemma~\ref{lem:KL-convex} yields
\begin{equation}
\label{eq:Scnt-KLsplit}
\mathrm{KL}(f\|G_L)
\le \sum_{\ell=1}^L p_\ell\mathrm{KL}(f_\ell\|g_\ell)+p_{>L}\mathrm{KL}(\nu_L\|h).
\end{equation}
By construction,
\begin{equation}
\label{eq:Scnt-firstterm}
\sum_{\ell=1}^L p_\ell\mathrm{KL}(f_\ell\|g_\ell)\le \varepsilon/2.
\end{equation}

If $p_{>L}=0$, then $\eqref{eq:Scnt-KLsplit}$ and \eqref{eq:Scnt-firstterm} already give $\mathrm{KL}(f\|G_L)\le \varepsilon/2$. Thus it remains only to treat the case $p_{>L}>0$.

Since $-\log h(x)=\frac d2\log(2\pi)+\frac12\left\lVert x \right\rVert^2$, we have
\begin{align}
\label{eq:Scnt-tailcomp}
p_{>L}\mathrm{KL}(\nu_L\|h)
&=\int_{B_L}\log\!\left(\frac{f(x)}{p_{>L}h(x)}\right)\,\mathrm{d}F(x)\notag\\
&=\int_{B_L}\log\!\left(\frac{f(x)}{h(x)}\right)\,\mathrm{d}F(x)+p_{>L}\log\!\left(\frac1{p_{>L}}\right)\notag\\
&\le \int_{B_L}\log_+ f(x)\,\mathrm{d}F(x)
+\frac d2\log(2\pi)\,p_{>L}
+\frac12\int_{B_L}\left\lVert x \right\rVert^2\,\mathrm{d}F(x)
+p_{>L}\log\!\left(\frac1{p_{>L}}\right).
\end{align}
By Proposition~\ref{prop:Scnt-entropy}, $f\log_+f\in \mathcal L^1(\mathbb{R}^d)$. Since $B_L\downarrow\varnothing$ as $L\to\infty$, absolute continuity of the integral gives
\[
\int_{B_L}\log_+ f(x)\,\mathrm{d}F(x)\to0,
\qquad
\int_{B_L}\left\lVert x \right\rVert^2\,\mathrm{d}F(x)\to0,
\qquad
p_{>L}\to0.
\]
Moreover, $p_{>L}\log(1/p_{>L})\to0$ as $L\to\infty$. Therefore the right-hand side of \eqref{eq:Scnt-tailcomp} converges to $0$, and so there exists $L=L(\varepsilon)$ such that
\begin{equation}
\label{eq:Scnt-tailterm}
p_{>L}\mathrm{KL}(\nu_L\|h)<\varepsilon/2.
\end{equation}
Combining \eqref{eq:Scnt-KLsplit}, \eqref{eq:Scnt-firstterm}, and \eqref{eq:Scnt-tailterm}, we obtain
\[
\mathrm{KL}(f\|G_L)<\varepsilon.
\]
Since $\varepsilon>0$ was arbitrary and $G_L\in\mathcal M$, this proves
\[
\inf_{g\in\mathcal M}\mathrm{KL}(f\|g)=0.
\]
The converse implication is exactly the necessity statement from Theorem~\ref{thm:necessity}, namely
\[
\inf_{g\in\mathcal M}\mathrm{KL}(f\|g)=0
\Longrightarrow
\int_{\mathbb R^d} \|x\|^2 f(x)\,d\lambda(x)<\infty.
\]
\end{proof}

\section{Counterexamples}\label{sec:counterexamples}

The next three results clarify the relation between $\mathcal F_{\mathrm{FSSA}}$, $\mathcal F_{\mathrm{CSSA}}$, and $\mathcal F_{\mathrm{Ent}}$. Proposition~\ref{prop:Scnt-strict} proves that $\mathcal F_{\mathrm{CSSA}}$ strictly extends $\mathcal F_{\mathrm{FSSA}}$ and reaches densities outside $\mathcal F_{\mathrm{Ent}}$. Proposition~\ref{prop:Ent-not-Scnt} then shows that the finite log-moment class is not contained in the countable-scale support-aware class. Together, these two results show that $\mathcal F_{\mathrm{Ent}}$ and $\mathcal F_{\mathrm{CSSA}}$ do not contain one another. Proposition~\ref{prop:routes-not-exhaustive} finally shows that even the union of the finite log-moment and countable-scale support-aware classes is not exhaustive.

\begin{proposition}
\label{prop:Scnt-strict}
Define
\[
Z_\star=\sum_{n=1}^\infty n^{-6},
\qquad
J_n=[n^2,n^2+n^{-6}],
\qquad
f_\star(x)=\frac{1}{Z_\star}\sum_{n=1}^\infty \mathbf{1}_{J_n}(x),
\qquad x\in\mathbb{R}.
\]
Then $f_\star$ is a probability density on $\mathbb{R}$ with finite second moment, and
\[
f_\star\in\mathcal F_{\mathrm{CSSA}},
\qquad
f_\star\notin\mathcal F_{\mathrm{FSSA}},
\qquad
f_\star\notin\mathcal F_{\mathrm{Ent}}.
\]
In particular,
\[
\mathcal F_{\mathrm{FSSA}}\subsetneq \mathcal F_{\mathrm{CSSA}}.
\]
\end{proposition}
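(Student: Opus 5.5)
We first dispose of the elementary claims. The intervals $J_n$ are pairwise disjoint, since $n^2+n^{-6}<(n+1)^2$. Hence $\int f_\star=Z_\star^{-1}\sum_n n^{-6}=1$. For the second moment we use
\[
\int x^2 f_\star\le Z_\star^{-1}\sum_n n^{-6}(n^2+1)^2,
\]
which is finite because the summand is of order $n^{-2}$.

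\emph{Membership in $\mathcal F_{\mathrm{CSSA}}$.} We take the partition $Y_n=J_n$ (half-open versions are fine, as endpoints are null), so $\operatorname{supp}(f_\star)=\bigsqcup_n Y_n$ and $p_n=n^{-6}/Z_\star$. The normalized restriction $f_n=n^{6}\mathbf 1_{J_n}$ is the uniform density on $J_n$, and it is continuous on its support. We choose $r_n=n^{-6}$ and $C_n(y)=J_n$, which is a closed cube of side $r_n$ containing $y$; then $D_n\equiv0$. For the partitions $A^{(m)}_{n,k}$ we cut $J_n$ into $m$ adjacent half-open subintervals of length $h_{n,m}=n^{-6}/m$, with $A^{(m)}_{n,0}=\varnothing$ (up to an endpoint). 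Then condition (iii)(a) is vacuous, and for (iii)(b) we need, for each $y\in J_n$, an interval of length $r_n/2$ inside $J_n$ with endpoint $y$; this holds by taking the half of $J_n$ on the longer side of $y$. The first summability condition is trivially $0$. The second is
\[
\sum_n p_n\log_+(1/r_n)=Z_\star^{-1}\sum_n 6n^{-6}\log n<\infty .
\]
The only delicate point is the endpoint/remainder convention, which we handle by taking $A_0$ to be the (null) right endpoint set, or by noting that closed/half-open choices do not matter $F$-a.e.

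\emph{Non-membership in $\mathcal F_{\mathrm{Ent}}$.} This is immediate, since $f_\star$ vanishes on the gaps and so is neither strictly positive nor continuous. (In fact $f_\star\log_+f_\star$ is integrable, so we do not need that.)

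\emph{Non-membership in $\mathcal F_{\mathrm{FSSA}}$.} This is the main step. Suppose witnesses $r>0$, $C(r,y)$ and $M$ exist, and fix $m\ge M$. Each $y\in\operatorname{supp}(f_\star)$ lies either in $A^{(m)}_0$ or in its complement. Condition (iii) then requires that the corresponding subset of $\operatorname{supp}(f_\star)$ contain an interval of length $r/2$ with endpoint $y$. In particular, $\operatorname{supp}(f_\star)$ itself must contain an interval of length $r/2$ with endpoint $y$. But $\operatorname{supp}(f_\star)\cap[y-r/2,y+r/2]$ is contained in $J_n$ whenever $y\in J_n$ and $n$ is large, and $J_n$ has length $n^{-6}<r/2$ for $n$ large. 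This contradicts (iii) for any $y$ in such a $J_n$, and these $y$ have positive $F$-mass.

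We should note that the definition of $\mathcal F_{\mathrm{FSSA}}$ quantifies over all partitions in its preamble. To be safe, we argue that whatever partitions are chosen, every point of the support lies in one of the two pieces, so the contradiction applies regardless. The strict inclusion then follows from the remark $\mathcal F_{\mathrm{FSSA}}\subset\mathcal F_{\mathrm{CSSA}}$.
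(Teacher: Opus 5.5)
Your proof is correct, and it takes a partly different route from the paper.

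\textbf{Membership in $\mathcal F_{\mathrm{CSSA}}$.} You use the same partition $Y_n=J_n$, but with the coarsest witness: $r_n=n^{-6}$ and $C_n(y)=J_n$ for every $y$. The paper instead takes $r_n=n^{-6}/4$ with a position-dependent interval $C_n(x)\subset J_n$. Both give $D_n\equiv0$ and a summable $\sum_n p_n\log_+(1/r_n)$. Your choice makes the anchored half-length subinterval in (iii)(b) immediate.

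\textbf{Non-membership in $\mathcal F_{\mathrm{FSSA}}$.} The paper uses clause (ii). Once $|J_n|<r$, every closed interval of length $r$ containing an interior point of $J_n$ reaches an adjacent gap where $f_\star=0$. So $D_r=\infty$ on a set of positive $F_\star$-mass, and the oscillation integral diverges. You use clause (iii) instead. In both cases (a) and (b), the anchored interval of length $r/2$ must lie in $\operatorname{supp}(f_\star)$, hence inside $J_n$ for large $n$, which is impossible once $|J_n|<r/2$. Your version is pointwise: a single bad $y$ already contradicts (iii), so your positive-mass remark is superfluous. It also does not depend on how the comparison cubes are chosen. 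The price is that it must hold for an arbitrary admissible partition, which you correctly address; the paper's argument never touches the partitions at all.

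\textbf{One technical correction.} Your suggested fix of taking $A^{(m)}_{n,0}=\{n^2+n^{-6}\}$ breaks (iii)(a) at that point. There $C_n(y)\cap A^{(m)}_{n,0}$ is a singleton and contains no interval of length $r_n/2$. Appealing to $F$-a.e.\ irrelevance does not help, because (iii) is required for every $y$. Instead, make the last subinterval closed, as the paper does, so that $A^{(m)}_{n,0}=\varnothing$ exactly.

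Likewise, keep $Y_n=J_n$ closed. Your parenthetical that half-open versions are fine is false for your witness. With half-open $Y_n$, the closed comparison interval $C_n(y)=J_n$ would contain a point where $f_n=0$, forcing $D_n\equiv\infty$.
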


\begin{proof}
The sets $J_n$ are pairwise disjoint and have total mass
\[
\int_{\mathbb{R}} f_\star(x)\,\mathrm{d}x
=\frac{1}{Z_\star}\sum_{n=1}^\infty |J_n|
=\frac{1}{Z_\star}\sum_{n=1}^\infty n^{-6}=1,
\]
so $f_\star$ is a density. Moreover,
\[
\int_{\mathbb{R}} x^2 f_\star(x)\,\mathrm{d}x
\le \frac{1}{Z_\star}\sum_{n=1}^\infty (n^2+n^{-6})^2 n^{-6}
\le \frac{4}{Z_\star}\sum_{n=1}^\infty n^{-2}<\infty,
\]
so $f_\star$ has finite second moment. Let $F_\star$ denote the probability measure with density $f_\star$.

We first verify that $f_\star\in \mathcal F_{\mathrm{CSSA}}$. Set $Y_n=J_n$. Then the sets $(Y_n)_{n\ge1}$ form a measurable partition of $\mathrm{supp}(f_\star)$ and
\[
p_n=F_\star(Y_n)=\frac{n^{-6}}{Z_\star}.
\]
For each $n$, the normalized restriction is
\[
f_n(x)=\frac{f_\star(x)\mathbf{1}_{Y_n}(x)}{p_n}
=\frac{\mathbf{1}_{J_n}(x)}{|J_n|}.
\]
Let $F_n$ denote the probability measure with density $f_n$. Define $r_n=n^{-6}/4$. For $x\in Y_n$, define the interval $C_n(x)$ by
\[
C_n(x)=
\begin{cases}
[x,x+r_n], & x\in [n^2,n^2+r_n),\\[4pt]
[x-r_n/2,x+r_n/2], & x\in [n^2+r_n,n^2+n^{-6}-r_n],\\[4pt]
[x-r_n,x], & x\in (n^2+n^{-6}-r_n,n^2+n^{-6}].
\end{cases}
\]
Then $C_n(x)\subset Y_n$ for every $x\in Y_n$, each $C_n(x)$ has side length $r_n$, and $f_n$ is constant on $Y_n$. Therefore
\[
D_n(x)=\log\!\left(\frac{f_n(x)}{\inf_{z\in C_n(x)}f_n(z)}\right)=0
\qquad\text{for every }x\in Y_n.
\]
For the support-geometry clause, partition $Y_n$ into $m$ adjacent intervals of equal length $n^{-6}/m$:
\[
A^{(m)}_{n,j}=\left[n^2+(j-1)\frac{n^{-6}}{m},\,n^2+j\frac{n^{-6}}{m}\right)
\quad (1\le j<m),
\qquad
A^{(m)}_{n,m}=\left[n^2+(m-1)\frac{n^{-6}}{m},\,n^2+n^{-6}\right],
\]
and let $A^{(m)}_{n,0}=\varnothing$. Since $C_n(x)\subset Y_n$ and, by construction, every $C_n(x)\cap Y_n$ contains an interval of length $r_n/2$ with a vertex at $x$, the density $f_n$ belongs to $\mathcal F_{\mathrm{FSSA}}$ with witness scale $r_n$.

Moreover,
\[
\sum_{n=1}^\infty p_n\int_{Y_n} D_n(x)\,\mathrm{d}F_n(x)=0
\]
and
\[
\sum_{n=1}^\infty p_n\log_+\!\left(\frac1{r_n}\right)
=\frac{1}{Z_\star}\sum_{n=1}^\infty n^{-6}\log(4n^6)<\infty.
\]
Hence $f_\star\in\mathcal F_{\mathrm{CSSA}}$.

We next show that $f_\star\notin\mathcal F_{\mathrm{FSSA}}$. Suppose to the contrary that $f_\star$ belongs to that class with some fixed local scale $r>0$. Choose $n$ so large that $n^{-6}<r$. If $x\in (n^2,n^2+n^{-6})$, then every interval $C(r,x)$ of length $r$ containing $x$ must extend outside $J_n$, because $|J_n|=n^{-6}<r$. Since $f_\star$ vanishes on the open gap adjacent to $J_n$, such an interval contains a point $z$ with $f_\star(z)=0$. Hence
\[
\inf_{z\in C(r,x)} f_\star(z)=0
\qquad\text{for every }x\in (n^2,n^2+n^{-6}),
\]
and therefore the defining oscillation integrand for $\mathcal F_{\mathrm{FSSA}}$ is infinite on a set of positive $F_\star$-measure. This contradiction shows that $f_\star\notin\mathcal F_{\mathrm{FSSA}}$.

Finally, $f_\star\notin\mathcal F_{\mathrm{Ent}}$ because $\mathcal F_{\mathrm{Ent}}$ consists of continuous strictly positive densities on all of $\mathbb{R}^d$, whereas $f_\star$ vanishes on the complement of $\bigcup_n J_n$ and is discontinuous at the endpoints of the intervals $J_n$. Since $f_\star\in\mathcal F_{\mathrm{CSSA}}$ and has finite second moment, Theorem~\ref{thm:Scntiff} gives
\[
\inf_{g\in\mathcal M}\mathrm{KL}(f_\star\|g)=0.
\]
This completes the proof.
\end{proof}

\begin{proposition}
\label{prop:Ent-not-Scnt}
There exists a density $f_\natural\in\mathcal F_{\mathrm{Ent}}$ such that
\[
\int_{\mathbb{R}} x^2 f_\natural(x)\,\mathrm{d}x<\infty,
\qquad
f_\natural\notin\mathcal F_{\mathrm{CSSA}}.
\]
\end{proposition}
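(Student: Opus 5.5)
We construct a bounded density (so the finite log-moment condition is automatic) built from extremely thin bumps separated by extremely deep valleys. The plan has three steps: reduce membership in $\mathcal F_{\mathrm{CSSA}}$ to a single pointwise integrability requirement, build $f_\natural$, and show it violates that requirement.

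\emph{Reduction.} Suppose $f\in\mathcal F_{\mathrm{CSSA}}$ with witnesses $(Y_\ell,r_\ell,C_\ell)$. Since $f_\ell$ vanishes off $Y_\ell$, any point $y\in Y_\ell$ whose comparison cube $C_\ell(y)$ leaves $Y_\ell$ has $\inf_{C_\ell(y)}f_\ell=0$, so $D_\ell(y)=\infty$. Finiteness of $\sum_\ell p_\ell\int D_\ell\,\mathrm dF_\ell$ therefore forces $C_\ell(y)\subset Y_\ell$ for $F$-a.e.\ $y$. For such $y$ we have $f_\ell=f/p_\ell$ on $C_\ell(y)$, and hence
\[
D_\ell(y)=\log\bigl(f(y)/\inf_{z\in C_\ell(y)}f(z)\bigr).
\]
Now set $r(y)=r_\ell$ and $C(y)=C_\ell(y)$ for $y\in Y_\ell$. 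The two summability conditions of Definition~\ref{def:Scnt} then say exactly that
\[
\int \log\!\Bigl(\frac{f(y)}{\inf_{z\in C(y)}f(z)}\Bigr)\,\mathrm dF(y)<\infty
\quad\text{and}\quad
\int \log_+\!\bigl(1/r(y)\bigr)\,\mathrm dF(y)<\infty,
\]
where $C(y)$ is a cube of side $r(y)$ containing $y$. So it suffices to build $f_\natural$ for which, for \emph{every} such assignment $y\mapsto (r(y),C(y))$, the sum of these two integrals is infinite. This also works because both integrands are nonnegative, so no cancellation between regions can occur.

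\emph{Construction (on $\mathbb R$).} For $n\ge1$ set $w_n=b_n=e^{-n^5}$ and $K_n=\lfloor n^{-4}e^{n^5}\rfloor$. In the interval $[n,n+1/2]$, place $K_n$ continuous tent bumps of height $1$ and base width $w_n$, with consecutive bumps separated by gaps of length about $1/(2K_n)\gg w_n$. Between the bumps, and on the gaps, let $f$ equal a continuous strictly positive background of size at most $b_n$ near cluster $n$. Globally, take this background to be, for instance, $\min(b_n,e^{-x^2})$ interpolated continuously, so that the total background mass and its second moment are finite. Then normalize.

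The resulting $f_\natural$ has the following properties:
\begin{enumerate}[label=(\arabic*)]
\item It is continuous, strictly positive and bounded by a constant, so $f_\natural\log_+f_\natural\in\mathcal L^1$ and $f_\natural\in\mathcal F_{\mathrm{Ent}}$.
\item Cluster $n$ carries mass of order $K_nw_n\asymp n^{-4}$, so $\int x^2 f_\natural\,\mathrm dx\lesssim\sum_n n^2 n^{-4}<\infty$.
\item The set $T_n$ of bump points in cluster $n$ where $f_\natural\ge 1/2$ (up to the normalizing constant) has $F$-mass $\gtrsim n^{-4}$.
\end{enumerate}

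\emph{Key dichotomy.} Fix $y\in T_n$ and any interval $C(y)\ni y$ of length $r(y)$. There are two cases.
\begin{itemize}[label={}]
\item If $r(y)\le w_n$, then $\log_+(1/r(y))\ge n^5$.
\item If $r(y)>w_n$, then $C(y)$ is longer than a whole bump, so it must reach the background adjacent to that bump, where $f\le b_n$. Hence $\log\bigl(f(y)/\inf_{C(y)}f\bigr)\ge \log\bigl(1/(2b_n)\bigr)\ge n^5-\log 2$.
\end{itemize}
Summing the two integrands over $T_n$ gives a contribution $\gtrsim n^{-4}(n^5-1)$. Summing over $n$, this diverges, which contradicts the reduction. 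Therefore $f_\natural\notin\mathcal F_{\mathrm{CSSA}}$.

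\emph{Main obstacle.} The step I expect to require the most care is the reduction, specifically justifying $C_\ell(y)\subset Y_\ell$ almost everywhere. One must also handle measurability of $y\mapsto r(y)$, which is immediate because $r$ is constant on each measurable piece $Y_\ell$. Beyond that, the bump geometry has to be arranged so that every interval longer than $w_n$ that contains a point of $T_n$ genuinely reaches a point where $f\le b_n$. Choosing tent bumps of base exactly $w_n$ that sit on the low background secures this.
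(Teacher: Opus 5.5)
Your proposal is correct and follows essentially the paper's route. The common ingredients are height-one tent bumps of exponentially small width ($e^{-n^5}$ for you, $e^{-n^4}$ in the paper) on an equally small positive background, with clusters of mass $\gtrsim n^{-4}$ near $x=n$. Both arguments then use the same dichotomy: either $\log_+(1/r_\ell)$ or the oscillation term is at least the exponent minus $\log 2$ on those clusters. The oscillation term is infinite when $C_\ell(y)$ leaves $Y_\ell$; your a.e.\ reduction packages that case up front, and the paper handles it inline. The only slip is that your clusters must start at $n\ge2$, since for $n=1$ the $K_1=2$ bumps of base $e^{-1}$ do not fit in $[1,3/2]$.
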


\begin{proof}
For $n\ge2$, let
\[
w_n=e^{-n^4},
\qquad
N_n=\left\lceil \frac{e^{n^4}}{32n^4}\right\rceil.
\]
For $k=0,\dots,N_n-1$, define
\[
c_{n,k}=n+(4k+2)w_n,
\qquad
\psi_{n,k}(x)=\max\!\left\{1-\frac{|x-c_{n,k}|}{w_n},0\right\}.
\]
Thus $\psi_{n,k}$ is the usual tent function of height $1$, centered at $c_{n,k}$, with support
\[
S_{n,k}=\left[n+(4k+1)w_n,\,n+(4k+3)w_n\right],
\]
so $|S_{n,k}|=2w_n$. Let
\[
h_n(x)=\sum_{k=0}^{N_n-1}\psi_{n,k}(x),
\qquad
I_n=[n,\,n+4N_nw_n],
\qquad
\nu(x)=e^{-x^4}+\sum_{n=2}^{\infty} h_n(x),
\qquad
Z_\natural=\int_{\mathbb{R}} \nu(x)\,\mathrm{d}x,
\]
and define
\[
f_\natural(x)=\frac{\nu(x)}{Z_\natural}.
\]
Since
\[
4N_nw_n\le \frac{1}{8n^4}+4e^{-n^4}<\frac14
\qquad (n\ge2),
\]
one has $I_n\subset [n,n+1/4]$ for every $n\ge2$. In particular, the intervals $(I_n)_{n\ge2}$ are pairwise disjoint. Hence, at each $x\in\mathbb{R}$, at most one of the functions $h_n(x)$ is nonzero, so the series defining $\nu$ is locally finite. Therefore $\nu$ is continuous. Also, $\nu(x)\ge e^{-x^4}>0$ for every $x$, and
\[
0<\nu(x)\le e^{-x^4}+1\le 2
\qquad \text{for all }x\in\mathbb{R}.
\]
Moreover, each tent function has area $w_n$, so
\[
\int_{\mathbb{R}} h_n(x)\,\mathrm{d}x=N_nw_n\le \frac{1}{32n^4}+e^{-n^4}.
\]
Consequently,
\[
0<Z_\natural\le \int_{\mathbb{R}} e^{-x^4}\,\mathrm{d}x+\sum_{n=2}^{\infty}\left(\frac{1}{32n^4}+e^{-n^4}\right)<\infty.
\]
Thus $f_\natural$ is a continuous strictly positive probability density on $\mathbb{R}$. Since $f_\natural\le 2/Z_\natural$, we have
\[
f_\natural(x)\log_+ f_\natural(x)
\le f_\natural(x)\log_+\!\left(\frac{2}{Z_\natural}\right)
\qquad \text{for all }x\in\mathbb{R},
\]
and the right-hand side is integrable. Hence $f_\natural\in\mathcal F_{\mathrm{Ent}}$.

We next verify the second moment. Since $x\mapsto e^{-x^4}$ has finite second moment and $I_n\subset[n,n+1/4]$, one has $x^2\le (n+1)^2$ on $I_n$. Therefore
\begin{align*}
\int_{\mathbb{R}} x^2\nu(x)\,\mathrm{d}x
&\le \int_{\mathbb{R}} x^2 e^{-x^4}\,\mathrm{d}x
+\sum_{n=2}^{\infty}(n+1)^2\int_{\mathbb{R}} h_n(x)\,\mathrm{d}x\\
&\le \int_{\mathbb{R}} x^2 e^{-x^4}\,\mathrm{d}x
+\sum_{n=2}^{\infty}(n+1)^2\left(\frac{1}{32n^4}+e^{-n^4}\right)<\infty.
\end{align*}
Thus $f_\natural$ has finite second moment. Let $F_\natural$ denote the probability measure with density $f_\natural$.

For each $n\ge2$ and $k=0,\dots,N_n-1$, define the central interval
\[
J_{n,k}=\left[c_{n,k}-\frac{w_n}{2},\,c_{n,k}+\frac{w_n}{2}\right],
\]
and let
\[
H_n=\bigcup_{k=0}^{N_n-1} J_{n,k}.
\]
If $x\in J_{n,k}$, then $|x-c_{n,k}|\le w_n/2$, and hence $\psi_{n,k}(x)\ge 1/2$. Thus $h_n(x)\ge1/2$ on $H_n$. Since $H_n\subset I_n$ and the intervals $(I_n)_{n\ge2}$ are pairwise disjoint, the sets $(H_n)_{n\ge2}$ are pairwise disjoint as well. Also,
\[
|H_n|=N_nw_n\ge \frac{1}{32n^4}.
\]
Therefore,
\[
F_\natural(H_n)=\frac{1}{Z_\natural}\int_{H_n}\nu(x)\,\mathrm{d}x
\ge \frac{1}{2Z_\natural}|H_n|
\ge \frac{1}{64Z_\natural}n^{-4}.
\]

We now prove that $f_\natural\notin\mathcal F_{\mathrm{CSSA}}$. Suppose to the contrary that $f_\natural\in\mathcal F_{\mathrm{CSSA}}$, and let $(Y_\ell)_{\ell\ge1}$, $(p_\ell)_{\ell\ge1}$, $(r_\ell)_{\ell\ge1}$, and the piecewise fixed-scale witnesses from Definition~\ref{def:Scnt} be given. For each $\ell$ with $p_\ell>0$, let
\[
f_\ell(x)=\frac{f_\natural(x)\mathbf 1_{Y_\ell}(x)}{p_\ell},
\]
let $F_\ell$ denote the probability measure with density $f_\ell$, and let
\[
D_\ell(y)=\log\!\left(\frac{f_\ell(y)}{\inf_{z\in C_\ell(y)}f_\ell(z)}\right),
\qquad y\in Y_\ell.
\]
Define piecewise functions on $\mathbb{R}$ by
\[
D(y)=D_\ell(y),
\qquad
R(y)=\log_+\!\left(\frac{1}{r_\ell}\right)
\qquad \text{for } y\in Y_\ell \text{ and } p_\ell>0,
\]
and set $D(y)=R(y)=0$ otherwise. Then
\[
\int_{\mathbb{R}} D(y)\,\mathrm{d}F_\natural(y)
=\sum_{\ell:p_\ell>0} p_\ell\int_{Y_\ell} D_\ell(y)\,\mathrm{d}F_\ell(y)<\infty
\]
and
\[
\int_{\mathbb{R}} R(y)\,\mathrm{d}F_\natural(y)
=\sum_{\ell:p_\ell>0} p_\ell\log_+\!\left(\frac{1}{r_\ell}\right)<\infty.
\]
We claim that, for every $n\ge2$ and every $y\in H_n$,
\[
D(y)+R(y)\ge n^4-\log 2.
\]
Indeed, fix $n\ge2$ and $y\in H_n$, and let $k$ be the unique index such that $y\in J_{n,k}$. Let $\ell$ be the unique index with $y\in Y_\ell$.

If $r_\ell\le 2w_n$, then
\[
R(y)=\log_+\!\left(\frac{1}{r_\ell}\right)
\ge \log\!\left(\frac{1}{2w_n}\right)
= n^4-\log 2.
\]

Suppose now that $r_\ell>2w_n$. Since $C_\ell(y)$ is an interval of length $r_\ell$ containing $y$ and $|S_{n,k}|=2w_n<r_\ell$, the interval $C_\ell(y)$ cannot be contained in $S_{n,k}$. Every support interval $S_{n,k}$ is separated from the rest of $\bigcup_{j=0}^{N_n-1}S_{n,j}$ by an open gap on each side lying inside $I_n$ (for the first and last supports, the outer adjacent gap has length $w_n$; otherwise the adjacent gaps have length $2w_n$). Because $C_\ell(y)$ is connected and contains a point of $S_{n,k}$ but is not contained in $S_{n,k}$, it must contain a point
\[
z\in I_n\setminus \bigcup_{j=0}^{N_n-1} S_{n,j}.
\]
If $z\notin Y_\ell$, then $f_\ell(z)=0$, and hence $D(y)=\infty$. It therefore remains only to consider the case $z\in Y_\ell$. In that case, $h_n(z)=0$ and $h_m(z)=0$ for every $m\neq n$, so
\[
\nu(z)=e^{-z^4}\le e^{-n^4},
\]
because $z\in I_n\subset[n,n+1/4]$. On the other hand, since $y\in H_n$, we have $\nu(y)\ge1/2$. Therefore,
\[
D(y)
\ge \log\!\left(\frac{f_\ell(y)}{f_\ell(z)}\right)
=\log\!\left(\frac{f_\natural(y)}{f_\natural(z)}\right)
=\log\!\left(\frac{\nu(y)}{\nu(z)}\right)
\ge \log\!\left(\frac{1/2}{e^{-n^4}}\right)
= n^4-\log 2.
\]
This proves the claim.

Integrating the claim over $H_n$ and summing over $n\ge2$, we obtain
\[
\int_{\mathbb{R}} (D(y)+R(y))\,\mathrm{d}F_\natural(y)
\ge \sum_{n=2}^{\infty}\int_{H_n}(D(y)+R(y))\,\mathrm{d}F_\natural(y)
\ge \sum_{n=2}^{\infty}(n^4-\log 2)F_\natural(H_n).
\]
Using the lower bound on $F_\natural(H_n)$, this yields
\[
\int_{\mathbb{R}} (D(y)+R(y))\,\mathrm{d}F_\natural(y)
\ge \frac{1}{64Z_\natural}\sum_{n=2}^{\infty}\left(1-\frac{\log 2}{n^4}\right)=\infty,
\]
which contradicts the finiteness of $\int D\,\mathrm{d}F_\natural$ and $\int R\,\mathrm{d}F_\natural$. Therefore $f_\natural\notin\mathcal F_{\mathrm{CSSA}}$. 
This completes the proof.
\end{proof}

\begin{definition}[Fat Cantor set]
\label{def:fatcantor}
A fat Cantor set is a compact nowhere dense subset of $[0,1]$ with strictly positive Lebesgue measure. A concrete construction is given in \citet[Chapter~8, pp.~87--88]{GelbaumOlmsted1964}.
\end{definition}

\begin{proposition}
\label{prop:routes-not-exhaustive}
Let $A\subset[0,1]$ be a fat Cantor set and define
\[
c=\lambda(A)^{-1},
\qquad
f^{\dagger}(x)=c\,\mathbf{1}_A(x),
\qquad x\in\mathbb{R}.
\]
Then $f^{\dagger}$ is a probability density on $\mathbb{R}$ with finite second moment, and
\[
\inf_{g\in\mathcal M}\mathrm{KL}(f^{\dagger}\|g)=0,
\qquad
f^{\dagger}\notin\mathcal F_{\mathrm{Ent}},
\qquad
f^{\dagger}\notin\mathcal F_{\mathrm{CSSA}}.
\]
\end{proposition}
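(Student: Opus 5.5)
The plan has four parts: the routine facts, the two non-memberships, and the KL-approximability claim.

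\textbf{Routine facts.} Since $A$ is compact with $\lambda(A)>0$, the function $f^\dagger$ is a density. Its support lies in $[0,1]$, so the second moment is finite.

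\textbf{Non-membership in $\mathcal F_{\mathrm{Ent}}$.} Since $f^\dagger$ vanishes on $[0,1]^c$, it is not strictly positive, so $f^\dagger\notin\mathcal F_{\mathrm{Ent}}$.

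\textbf{Non-membership in $\mathcal F_{\mathrm{CSSA}}$.} I argue by contradiction, as in Proposition~\ref{prop:Ent-not-Scnt}. Suppose witnesses $(Y_\ell)$, $(r_\ell)$, $(C_\ell)$ exist, and pick $\ell$ with $p_\ell>0$. For $y\in Y_\ell\subset A$, the comparison interval $C_\ell(y)$ has length $r_\ell>0$ and contains $y$. Because $A$ is nowhere dense, $C_\ell(y)$ contains an open subinterval disjoint from $A$, hence a point $z$ with $f_\ell(z)=0$. Therefore $D_\ell(y)=\infty$ for every $y\in Y_\ell$, and $\int D_\ell\,\mathrm dF_\ell=\infty$, contradicting Definition~\ref{def:Sfix}(ii) for $f_\ell$. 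So every piece must have $p_\ell=0$, which contradicts $\sum_\ell p_\ell=1$. The only subtlety is that $\mathrm{supp}(f^\dagger)=A$ with the paper's convention, so each $Y_\ell\subset A$; then any point of $C_\ell(y)$ outside $A$ lies outside $Y_\ell$ and has $f_\ell=0$.

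\textbf{KL approximability.} This is the substantive part, and the main obstacle, since neither sufficient class applies. I would verify the hypotheses of Proposition~\ref{prop:UIKL} directly by mollification. Let
\[
u_\sigma = f^\dagger * \varphi(\cdot;0,\sigma^2),
\]
and take $g_m=(1-\delta_m)h_m+\delta_m\varphi(\cdot;0,1)$, where $h_m$ is a finite GMM obtained by discretising the convolution integral $u_{\sigma_m}$ (a Riemann-sum mixture of Gaussians centred at points of $A$).

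\emph{Pointwise convergence.} By the Lebesgue differentiation theorem, $u_\sigma(x)\to f^\dagger(x)$ for a.e.\ $x$ as $\sigma\to0$. Since $f^\dagger=c$ on $A$, we get $f^\dagger/u_\sigma\to1$ $F$-a.e.

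\emph{Bound on the log-ratio.} Because $f^\dagger\le c$ and $u_\sigma>0$ everywhere, we have $\log_+(f^\dagger/u_\sigma)\le \log(c/u_\sigma)$ on $A$. I do not expect a uniform bound on $c/u_\sigma$ to hold. Instead I use the maximal-type lower bound
\[
u_\sigma(x)\ge c\,\kappa\,\frac{\lambda(A\cap[x-\sigma,x+\sigma])}{2\sigma},
\]
where $\kappa>0$ is a constant depending only on the Gaussian kernel. This reduces the problem to uniform integrability of $\log_+$ of the inverse local density ratio $\theta_\sigma(x)=\lambda(A\cap[x-\sigma,x+\sigma])/(2\sigma)$ over $x\in A$. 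I would try to prove a bound of the form
\[
\lambda\bigl(\{x\in A:\theta_\sigma(x)<t\}\bigr)\le C t
\]
uniformly in $\sigma$. This should follow from a Vitali covering argument: if each such $x$ has a $\sigma$-interval in which $A$ has proportion less than $t$, then the union $U$ of these intervals satisfies $\lambda(A\cap U)\le 3t\,\lambda(U)\le 3t\cdot 3$, since $U\subset[-1,2]$. The resulting tail bound $F(\log(1/\theta_\sigma)>s)\lesssim e^{-s}$ gives the uniform integrability.

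\emph{Discretisation and the mixture weight.} The remaining work is routine. The Riemann-sum discretisation $h_m$ approximates $u_{\sigma_m}$ uniformly on $[0,1]$ to relative accuracy, because $u_{\sigma_m}$ is bounded below on $[0,1]$ for fixed $\sigma_m$. Taking $\delta_m\to0$ then makes $\log(1/(1-\delta_m))$ negligible, so the hypotheses of Proposition~\ref{prop:UIKL} hold for $(g_m)$ and $\mathrm{KL}(f^\dagger\|g_m)\to0$.
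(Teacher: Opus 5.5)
Your routine facts and both non-memberships are essentially the paper's argument. For $\mathcal F_{\mathrm{CSSA}}$, the paper picks one $y$ with $D_\ell(y)<\infty$ and derives $C_\ell(y)\subset A$, which is the contrapositive of your observation that $D_\ell\equiv\infty$ on $Y_\ell$.

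The KL part, however, takes a genuinely different route. The paper does not verify Proposition~\ref{prop:UIKL} here. By outer regularity and Urysohn's lemma it takes $u_m\in C_c(\mathbb{R};[0,1])$ with $\mathbf 1_A\le u_m\le \mathbf 1_{O_m}$ and $\lambda(O_m\setminus A)<m^{-1}$. The normalised $s_m=c_m u_m$ is then exactly the constant $c_m$ on all of $A$, with $c_m\to c$. Lemma~\ref{lem:compactapprox} gives $g_m\in\mathcal M$ within $\eta_m$ of $c_m$ uniformly on $A$, and $\mathrm{KL}(f^{\dagger}\|g_m)\le|\log(c/c_m)|+2\eta_m/c_m$ follows in one line. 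The idea is that KL only sees $g$ on $A$, so one may fill the gaps of $A$ with a continuous plateau. This gives uniform control of the log-ratio on $A$, which Gaussian mollification cannot deliver, as you correctly anticipate.

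Your route makes up for the lack of uniform control with Lebesgue differentiation and a uniform-in-$\sigma$ tail bound for $\log(1/\theta_\sigma)$. It is longer, but it buys three things:
\begin{itemize}
\item it is explicit: common-variance mixtures with means in $A$, and no appeal to Lemma~\ref{lem:compactapprox};
\item it shows that the mechanism of Proposition~\ref{prop:UIKL} itself reaches outside both sufficient classes;
\item it never uses that $A$ is closed, whereas the Urysohn step does, since a continuous $u\ge\mathbf 1_A$ satisfies $u\ge \mathbf 1_{\bar A}$.
\end{itemize}

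One step needs repair as written. Vitali's lemma gives disjoint $I_{x_i}$ with $U\subset\bigcup_i 3I_{x_i}$. That bounds $\lambda(U)$ but says nothing about the proportion of $A$ in $3I_{x_i}$, so it does not yield $\lambda(A\cap U)\le 3t\,\lambda(U)$. Because all intervals have the same length $2\sigma$, a direct count suffices:
\begin{itemize}
\item Tile $\mathbb{R}$ by half-open cells $Q$ of length $\sigma$, and let $E_t=\{x\in A:\theta_\sigma(x)<t\}$.
\item If $Q$ meets $E_t$ at $y$, then $Q\subset[y-\sigma,y+\sigma]$, so $\lambda(E_t\cap Q)\le\lambda(A\cap[y-\sigma,y+\sigma])<2\sigma t$.
\item At most $\sigma^{-1}+2$ cells meet $[0,1]$, so $\lambda(E_t)\le 2t(1+2\sigma)$.
\end{itemize}
This is the uniform bound you need. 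With it, the rest of your argument (pointwise convergence, the Riemann-sum discretisation, and the vanishing mixture weight) goes through.
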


\begin{proof}
Since $A\subset[0,1]$ has strictly positive finite Lebesgue measure,
\[
\int_{\mathbb{R}} f^{\dagger}(x)\,\mathrm{d}x
=c\,\lambda(A)=1.
\]
Thus $f^{\dagger}$ is a probability density. Its second moment is finite because $f^{\dagger}$ is supported on $[0,1]$. Let $F^{\dagger}$ denote the probability measure with density $f^{\dagger}$.

Also $f^{\dagger}\notin\mathcal F_{\mathrm{Ent}}$, because $\mathcal F_{\mathrm{Ent}}$ consists of continuous strictly positive densities on all of $\mathbb{R}^d$, whereas $f^{\dagger}$ vanishes on $\mathbb{R}\setminus A$ and is discontinuous at every point of $A=\partial A$.

We next show that $f^{\dagger}\notin\mathcal F_{\mathrm{CSSA}}$. Suppose to the contrary that
\[
A=\bigsqcup_{\ell\ge1} Y_\ell
\]
is a partition witnessing membership in $\mathcal F_{\mathrm{CSSA}}$. Since
\[
1=F^{\dagger}(A)=\sum_{\ell\ge1} F^{\dagger}(Y_\ell),
\]
there exists $\ell$ such that $F^{\dagger}(Y_\ell)>0$. Let
\[
f_\ell(x)=\frac{f^{\dagger}(x)\mathbf{1}_{Y_\ell}(x)}{F^{\dagger}(Y_\ell)}.
\]
By Definition~\ref{def:Scnt}, the density $f_\ell$ belongs to $\mathcal F_{\mathrm{FSSA}}$ with some scale $r_\ell>0$ and comparison cube $C_\ell(y)$ for every $y\in Y_\ell$. Moreover, the defining oscillation integral of $f_\ell$ is finite, so the set
\[
E_\ell=\left\{y\in Y_\ell: \log\!\left(\frac{f_\ell(y)}{\inf_{z\in C_\ell(y)}f_\ell(z)}\right)<\infty\right\}
\]
has full $F_\ell$-measure and is therefore nonempty. Pick $y\in E_\ell$. Then
\[
\inf_{z\in C_\ell(y)} f_\ell(z)>0.
\]
Since $f_\ell$ vanishes on $\mathbb{R}\setminus Y_\ell$, this forces $C_\ell(y)\subset Y_\ell\subset A$. But $C_\ell(y)$ is a nondegenerate interval of length $r_\ell$, so it has nonempty interior, whereas the fat Cantor set $A$ has empty interior because it is nowhere dense. This contradiction shows that $f^{\dagger}\notin\mathcal F_{\mathrm{CSSA}}$.

It remains to prove KL approximability. Since $A$ is compact and Lebesgue measurable, outer regularity yields bounded open sets $O_m$ such that
\[
A\subset O_m\subset (-1,2),
\qquad
\lambda(O_m\setminus A)<m^{-1}.
\]
Because $\mathbb{R}$ is normal, Urysohn's lemma yields functions $u_m\in C_c(\mathbb{R};[0,1])$ such that
\[
\mathbf{1}_A\le u_m\le \mathbf{1}_{O_m}.
\]
Define
\[
I_m=\int_{\mathbb{R}} u_m(x)\,\mathrm{d}x,
\qquad
c_m=I_m^{-1},
\qquad
s_m(x)=c_m u_m(x).
\]
Then each $s_m$ is a continuous compactly supported density on $\mathbb{R}$, and on $A$ one has $u_m=1$, hence
\[
s_m(x)=c_m
\qquad\text{for every }x\in A.
\]
Moreover,
\[
\lambda(A)\le I_m\le \lambda(O_m)\le \lambda(A)+m^{-1},
\]
so $c_m\to c$.

Apply Lemma~\ref{lem:compactapprox} with $R=2$ and a sequence of tolerances $(\eta_m)_{m\ge1}$ satisfying $\eta_m\downarrow0$ and $\eta_m<c_m/2$ for every $m$, obtaining $g_m\in\mathcal M$ with
\[
\sup_{x\in B_2} |g_m(x)-s_m(x)|<\eta_m.
\]
Since $A\subset[0,1]\subset B_2$, it follows that on $A$,
\[
|g_m(x)-c_m|<\eta_m,
\qquad
\text{hence}
\qquad
g_m(x)\ge c_m-\eta_m>\frac{c_m}{2}.
\]
Therefore,
\begin{align*}
\mathrm{KL}(f^{\dagger}\|g_m)
&=\int_A c\log\!\left(\frac{c}{g_m(x)}\right)\,\mathrm{d}x\\
&=\int_A c\log\!\left(\frac{c}{c_m}\right)\,\mathrm{d}x+\int_A c\log\!\left(\frac{c_m}{g_m(x)}\right)\,\mathrm{d}x\\
&\le \left|\log\!\left(\frac{c}{c_m}\right)\right|+\sup_{x\in A}\left|\log\!\left(\frac{c_m}{g_m(x)}\right)\right|.
\end{align*}
On $A$, both $c_m$ and $g_m(x)$ lie in $[c_m/2,3c_m/2]$, so the mean value theorem for $\log$ gives
\[
\left|\log\!\left(\frac{c_m}{g_m(x)}\right)\right|
\le \frac{|c_m-g_m(x)|}{c_m/2}
\le \frac{2\eta_m}{c_m}
\qquad (x\in A).
\]
Consequently,
\[
\mathrm{KL}(f^{\dagger}\|g_m)
\le \left|\log\!\left(\frac{c}{c_m}\right)\right|+\frac{2\eta_m}{c_m}\to0.
\]
Hence $\inf_{g\in\mathcal M}\mathrm{KL}(f^{\dagger}\|g)=0$, as claimed.
\end{proof}

\begin{remark}
The two sufficient classes treated in this manuscript capture the structural hypotheses that arise most naturally in qualitative GMM approximation: either the target is globally positive with enough logarithmic integrability to control the finite log-ratios, or it has a support with enough local geometric regularity to support the lower-envelope method. Proposition~\ref{prop:routes-not-exhaustive} shows that this does not exhaust all KL-approximable densities. The density constructed there lies outside both sufficient classes because its support is deliberately singular: it vanishes off its support and is discontinuous on the support boundary, so it fails the finite log-moment class, while the fat Cantor support contains no nondegenerate interval, so the local cube-thickness mechanism behind the support-aware classes cannot apply.
\end{remark}

\section{Conclusion}\label{sec:conclusion}

The main outcome of this manuscript is that the second moment is necessary for KL approximation by finite GMMs. The necessity statement is unconditional and depends only on the elementary fact that every finite GMM has a quadratic exponential moment. The sufficient direction is more delicate and is not attached to any single canonical structural class. Instead, the meta theorem separates the universal probabilistic part of the argument from the class-specific verification.

In this manuscript we verified that mechanism via two approaches. The first approach is global: continuity, strict positivity, and a finite log-moment condition are enough to force the finite log-ratio family to be uniformly integrable for a suitable compact approximation scheme. This approach automatically covers bounded, $\mathcal L^p$ with $p>1$, and Orlicz-dominated subclasses, and it also contains the logarithmic H\"older classes used in \citet{KruijerRousseauvdV2010} and \citet{MaugisRabusseauMichel2013}; see Remark~\ref{rem:rateclasses}. The second approach is geometric and support-aware: it includes the fixed-scale class from \citet{NoretsPelenis2012} and the countable-scale extension introduced here. Proposition~\ref{prop:Scnt-strict} shows that the countable-scale class is strictly larger than this fixed-scale class and reaches densities with support boundaries and zeros that are not covered by the present finite log-moment class. Proposition~\ref{prop:Ent-not-Scnt} shows in the opposite direction that the finite log-moment class is not contained in the countable-scale support-aware class, so the two sufficient classes do not contain one another. Proposition~\ref{prop:routes-not-exhaustive} shows, moreover, that their union is still not exhaustive.

Several directions remain open. One natural problem is to weaken the finite log-moment approach so as to admit zeros of the target density without importing the complicated support geometry of the support-aware framework. Another is to sharpen the present qualitative equivalence into quantitative rates under minimal moment and regularity assumptions, to extend upon the rate results for the logarithmic H\"older classes of \citet{KruijerRousseauvdV2010} and \citet{MaugisRabusseauMichel2013}. A third direction comes from conditional density estimation by covariate-dependent mixtures. \citet{NoretsPelenis2014} extend the Gaussian-mixture program to a conditional-distribution framework involving covariate-dependent mixing weights. This perspective is closely related to the mixture-of-experts approximation literature; see, for example, \citet{NguyenChamroukhiForbes2019} and \citet{NguyenHienTTChamroukhiMcLachlan2021}. It would be interesting to determine whether the necessity argument and the meta-sufficiency principle developed here admit an analogue in that setting, thereby yielding a sharper characterization of Gaussian mixture-of-experts approximation of conditional distributions.

\appendix

\section{Auxiliary results}\label{app:external}

\begin{theorem}[Vitali convergence theorem]
\label{thm:vitali}
Let $(X_n)_{n\ge1}$ be a sequence of nonnegative random variables on $(\Omega,\mathscr F,\mathbb P)$ such that $X_n\to0$ almost surely and $\{X_n:n\ge1\}$ is uniformly integrable. Then
\[
\int_\Omega X_n\,\mathrm d\mathbb P\to0.
\]
\end{theorem}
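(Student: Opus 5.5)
The statement is the Vitali convergence theorem in the special case of nonnegative variables converging to zero. I will prove it by the truncation argument already used in Proposition~\ref{prop:UIKL}, which needs only dominated convergence and the definition of uniform integrability.

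Fix $\varepsilon>0$. By uniform integrability, I first choose $M<\infty$ such that
\[
\sup_{n\ge1}\int_{\{X_n>M\}}X_n\,\mathrm d\mathbb P<\varepsilon.
\]
For every $n$, I then split the integral as
\[
\int_\Omega X_n\,\mathrm d\mathbb P=\int_\Omega X_n\mathbf 1_{\{X_n\le M\}}\,\mathrm d\mathbb P+\int_{\{X_n>M\}}X_n\,\mathrm d\mathbb P.
\]
The second term is below $\varepsilon$ uniformly in $n$.

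For the first term, the integrand $X_n\mathbf 1_{\{X_n\le M\}}$ lies in $[0,M]$. It is also bounded by $X_n$, so it tends to $0$ almost surely. The constant $M$ is integrable because $\mathbb P$ is a probability measure. Dominated convergence therefore sends the first term to $0$.

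Consequently $\limsup_n\int X_n\,\mathrm d\mathbb P\le\varepsilon$. Since $\varepsilon$ was arbitrary and the integrals are nonnegative, the limit is $0$.

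There is no real obstacle here. The only points to check are that uniform integrability makes each $X_n$ integrable, so the splitting is legitimate, and that finiteness of $\mathbb P$ is what makes the constant $M$ a valid dominating function.
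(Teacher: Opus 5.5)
Your proof is correct. The paper states this theorem without proof and cites \citet{Billingsley1995}; your argument is essentially the same one the paper runs inline in the proof of Proposition~\ref{prop:UIKL}. That argument truncates at a uniform-integrability level $M$, bounds the tail part by $\varepsilon$ uniformly in $n$, and applies dominated convergence with the constant dominator $M$.
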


\begin{lemma}[Log-sum inequality]
\label{lem:logsum}
If $a_i\ge 0$ and $b_i>0$ for $1\le i\le L$, then
\[
\left(\sum_{i=1}^L a_i\right)\log\!\left(\frac{\sum_{i=1}^L a_i}{\sum_{i=1}^L b_i}\right)
\le \sum_{i=1}^L a_i \log\!\left(\frac{a_i}{b_i}\right),
\]
with the usual convention $0\log 0=0$.
\end{lemma}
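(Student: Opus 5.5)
The log-sum inequality is the pointwise convexity statement for the perspective function $(a,b)\mapsto a\log(a/b)$. I would derive it from Jensen's inequality applied to the convex function $\psi(t)=t\log t$ on $[0,\infty)$, where $\psi(0)=0$ by the stated convention.

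First, dispose of the trivial case. If $\sum_i a_i=0$, then every $a_i=0$. Both sides then vanish under the convention $0\log 0=0$ (interpreted as $0\log(0/b_i)=0$).

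Otherwise, set $A=\sum_i a_i>0$ and $B=\sum_i b_i>0$, and define weights $w_i=b_i/B$. These weights are strictly positive and sum to one. Put $t_i=a_i/b_i\ge0$. Then
\[
\sum_i w_i t_i=\frac{A}{B},
\qquad
\sum_i w_i\psi(t_i)=\frac1B\sum_i a_i\log\!\left(\frac{a_i}{b_i}\right).
\]
The second identity holds because terms with $a_i=0$ contribute $0$ on both sides.

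The function $\psi$ is convex on $[0,\infty)$: it is continuous there and satisfies $\psi''(t)=1/t>0$ on $(0,\infty)$. Finite Jensen therefore gives $\psi\!\left(\sum_i w_it_i\right)\le\sum_i w_i\psi(t_i)$. This reads
\[
\frac AB\log\frac AB\le \frac1B\sum_i a_i\log\frac{a_i}{b_i}.
\]
Multiplying both sides by $B>0$ yields the claim.

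There is no real obstacle here. The only points needing care are handling the zero entries consistently with the convention, and justifying convexity of $\psi$ at the endpoint $t=0$. For the latter, continuity of $\psi$ at $0$ together with convexity on the open interval suffices.
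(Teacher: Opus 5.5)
Your proposal is correct and follows the route the paper indicates. The paper gives no detailed proof and only remarks that the inequality follows from Jensen's inequality for the convex function $u\mapsto u\log u$; your choice of weights $b_i/\sum_j b_j$, together with your treatment of zero entries and of convexity at the endpoint $0$, fills in that sketch.
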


\begin{lemma}[Variational entropy inequality]
\label{lem:variational}
Let $f$ and $g$ be probability densities on $\mathbb{R}^d$, and let $F$ be the probability measure with density $f$. Let $\phi:\mathbb{R}^d\to[0,\infty]$ be measurable and assume
\[
Z_\phi=\int_{\mathbb{R}^d} e^{\phi(x)}g(x)\,\mathrm{d}x<\infty.
\]
Then
\[
\int \phi\,\mathrm{d}F \le \mathrm{KL}(f\|g)+\log Z_\phi.
\]
\end{lemma}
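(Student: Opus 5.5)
We prove the variational entropy inequality (Lemma~\ref{lem:variational}) by reducing it to Gibbs' inequality for a tilted density. First we dispose of trivial cases. If $\mathrm{KL}(f\|g)=\infty$ there is nothing to prove, since $Z_\phi<\infty$ and $Z_\phi\ge\int_{\{\phi<\infty\}}g\ge0$; we should also check $\log Z_\phi>-\infty$. Because $\phi\ge0$, we have $Z_\phi\ge\int g=1$, so $\log Z_\phi\ge0$. We may therefore assume $\mathrm{KL}(f\|g)<\infty$. Then $f\ll g$ in the sense that $g>0$ $F$-a.e. Moreover $\phi<\infty$ $g$-a.e., since $Z_\phi<\infty$, and hence $\phi<\infty$ $F$-a.e.

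Next define the tilted probability density
\[
g_\phi(x)=\frac{e^{\phi(x)}g(x)}{Z_\phi},
\]
which is a genuine density because $Z_\phi\in[1,\infty)$. On the set where $f>0$ (and hence, $F$-a.e., $g>0$ and $\phi<\infty$) we have the pointwise identity
\[
\log\frac{f}{g}=\log\frac{f}{g_\phi}+\phi-\log Z_\phi.
\]
We integrate this against $F$. Gibbs' inequality gives $\int\log(f/g_\phi)\,\mathrm dF=\mathrm{KL}(f\|g_\phi)\ge0$, and rearranging yields $\int\phi\,\mathrm dF\le\mathrm{KL}(f\|g)+\log Z_\phi$.

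The main obstacle is justifying that integration, since the terms are not a priori integrable and we cannot blindly split integrals. To handle this, we first truncate and replace $\phi$ by $\phi_N=\min\{\phi,N\}$. Then $\phi_N$ is bounded and $Z_{\phi_N}\le Z_\phi$. Since $\phi_N$ is bounded, it is $F$-integrable. The negative part of $\log(f/g)$ is $F$-integrable whenever the KL divergence is defined, because $\int f\log_-(f/g)\le\int_{\{g>f\}} g\le1$. Together these make the splitting legitimate. Specifically, $\log(f/g_{\phi_N})=\log(f/g)-\phi_N+\log Z_{\phi_N}$ has integrable negative part, and its $F$-integral equals $\mathrm{KL}(f\|g_{\phi_N})\ge0$. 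This gives
\[
\int\phi_N\,\mathrm dF\le \mathrm{KL}(f\|g)+\log Z_{\phi_N}\le\mathrm{KL}(f\|g)+\log Z_\phi.
\]
Finally, letting $N\to\infty$ and applying monotone convergence (using $\phi_N\uparrow\phi$ with $\phi\ge0$) yields the claim.
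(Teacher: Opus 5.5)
Your proof is correct. It is the standard Donsker--Varadhan change-of-measure argument: the tilting identity $\log(f/g)=\log(f/g_{\phi_N})+\phi_N-\log Z_{\phi_N}$, justified via truncation and the integrability of the negative part of $\log(f/g)$, then Gibbs' inequality and monotone convergence. The paper gives no proof of its own; it cites the lemma from Dupuis and Ellis (Chapter~1), where it is established by essentially this same tilting route for bounded functions and then extended by monotone limits.
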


\begin{lemma}[Uniform approximation on a fixed ball]
\label{lem:compactapprox}
Let $f$ be a continuous probability density on $\mathbb{R}^d$. For every $R>0$ and every $\eta>0$, there exists a finite GMM $h\in\mathcal M$ such that
\[
\sup_{x\in B_R} |h(x)-f(x)|<\eta.
\]
\end{lemma}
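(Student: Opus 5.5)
The statement is Lemma~\ref{lem:compactapprox}. My plan is to approximate $f$ uniformly on $B_R$ in two stages. First I replace $f$ by its Gaussian smoothing $f_\sigma=f*\varphi(\cdot;0,\sigma^2I_d)$. Then I discretise the convolution integral into a finite Riemann sum with nonnegative weights summing to one.

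\textbf{Stage one.} Since $f$ is continuous, it is uniformly continuous on the compact set $B_{R+1}$. Write
\[
|f_\sigma(x)-f(x)|\le\int|f(x-y)-f(x)|\varphi(y;0,\sigma^2I_d)\,\mathrm{d}y
\]
and split the integral into $\|y\|\le\delta$ and $\|y\|>\delta$. The first piece is at most the modulus of continuity $\omega(\delta)$ on $B_{R+1}$. The second piece needs care, because $f$ need not be bounded globally. I bound it by
\[
\sup_{\|y\|>\delta}\varphi(y;0,\sigma^2I_d)+f(x)\,\mathbb P(\|\sigma W\|>\delta),
\]
using $\int f(x-y)\varphi(y)\,\mathrm{d}y\le\sup\varphi$ over the region, since $\int f=1$. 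Both terms tend to $0$ as $\sigma\to0$, uniformly in $x\in B_R$, because $f\le\max_{B_R}f$ there. Hence I can fix $\sigma$ with $\sup_{B_R}|f_\sigma-f|<\eta/3$.

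\textbf{Stage two.} I choose $T>R$ so large that $F(B_T^c)$ is small. The tail part of $f_\sigma$ is then at most $(2\pi\sigma^2)^{-d/2}F(B_T^c)<\eta/3$. Next I partition $B_T$ into finitely many small cells $Q_i$ with masses $w_i=F(Q_i)$ and representative points $y_i$. Set
\[
h=\sum_i \frac{w_i}{\sum_k w_k}\varphi(\cdot;y_i,\sigma^2I_d).
\]
This is in $\mathcal M$, since the weights are nonnegative and sum to one. The sum $\sum_i w_i\varphi(x;y_i,\sigma^2I_d)$ differs from $\int_{B_T}\varphi(x-y)f(y)\,\mathrm{d}y$ by at most the Lipschitz constant of $\varphi$ times the mesh. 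That Lipschitz constant depends only on $\sigma$ and $d$, so the error is small uniformly in $x$. Renormalising by $\sum_k w_k=F(B_T)\ge1-\epsilon$ changes $h$ by at most $\epsilon(2\pi\sigma^2)^{-d/2}/(1-\epsilon)$.

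Combining the three bounds gives $\sup_{B_R}|h-f|<\eta$. The main obstacle is stage one, namely controlling $f_\sigma-f$ without a global bound on $f$. The splitting above resolves it, and the remaining steps are routine.
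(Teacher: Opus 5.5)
Your argument is correct. It differs from the paper mainly in that the paper gives no argument of its own: Lemma~\ref{lem:compactapprox} sits in the appendix and is justified only by citing it as the Gaussian case of Theorem~2.1(a) of Nguyen, Nguyen, Chamroukhi and McLachlan (2022), a general result on location--scale finite mixtures. You instead give a self-contained proof by the standard approximate-identity mechanism. You smooth with $f_\sigma=f*\varphi(\cdot;0,\sigma^2I_d)$, truncate to $B_T$, and discretise the convolution by a Riemann sum; the Lipschitz constant of $\varphi(\cdot;0,\sigma^2I_d)$ makes the discretisation error uniform in $x$.

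The step you flagged, controlling $f_\sigma-f$ on $B_R$ without a global bound on $f$, is handled correctly. The far-field term is at most $\sup_{\|y\|>\delta}\varphi(y;0,\sigma^2I_d)\int f=(2\pi\sigma^2)^{-d/2}e^{-\delta^2/(2\sigma^2)}\to0$ for fixed $\delta\le 1$. The order of choices ($\delta$, then $\sigma$, then $T$, then the mesh) is also right.

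The citation buys brevity and generality, since it covers arbitrary kernels and compact sets. Your route buys transparency. It shows that only continuity of $f$ and $\int f=1$ are used, with no boundedness or decay at infinity. It also shows that mixtures with one common isotropic covariance $\sigma^2I_d$ and grid locations already suffice. This covers both places the paper invokes the lemma: the proof of Theorem~\ref{thm:entclass-UI}, and Proposition~\ref{prop:routes-not-exhaustive} with the targets $s_m$.

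One bookkeeping point needs fixing. You have four error terms, not three: smoothing, tail, discretisation and renormalisation. The renormalisation error $(2\pi\sigma^2)^{-d/2}\epsilon/(1-\epsilon)$ is of the same size as the tail error you already budgeted at $\eta/3$. So the bounds as stated need not sum to less than $\eta$. Allot $\eta/4$ to each of the four terms when choosing $\sigma$, $T$ and the mesh, and the conclusion follows.
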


\begin{remark}
Theorem~\ref{thm:vitali} is a standard Vitali convergence theorem; see \citet[Chapter~16]{Billingsley1995}. The log-sum inequality is classical and follows from Jensen's inequality for the convex function $u\mapsto u\log u$. Lemma~\ref{lem:variational} is a standard relative-entropy inequality; see \citet[Chapter~1]{DupuisEllis1997}. Lemma~\ref{lem:compactapprox} is the Gaussian specialization of Theorem~2.1\textnormal{(a)} of \citet{NguyenTTEtAl2022}.
\end{remark}

\end{document}